\newcommand{\be}{\begin{equation}}
\newcommand{\ee}{\end{equation}}
\newtheorem{definition}{Definition}[section]
\newtheorem{theorem}[definition]{Theorem}
\newtheorem{corollary}[definition]{Corollary}
\newtheorem{remark}[definition]{Remark}
\title{A Multilevel Monte Carlo Estimator for Matrix Multiplication\thanks{NP and YW are grateful to EPSRC for funding this work through the project EP/R041431/1: `Randomness: a resource for real-time analytics'; YW is also funded by The Alan Turing Institute under the EPSRC grant EP/N510129/1 and by EPSRC through the project EP/S026347/1:' Unparameterised multi-modal data, high order signatures, and the mathematics of data science'.}}
\author{Yue Wu \thanks{Mathematical Institute, University of Oxford, Oxford, UK; The Alan Turing Institute, London, UK;} 
 \and Nick Polydorides
 \thanks{School of Engineering, University of Edinburgh, Edinburgh, UK; The Alan Turing Institute, London, UK.}
}
\begin{document}

\maketitle

\begin{abstract}
Inspired by recent developments in multilevel Monte Carlo (MLMC) methods and randomised sketching for linear algebra problems we propose a MLMC estimator for real-time processing of matrix structured random data. Our algorithm is particularly effective in handling high-dimensional inner products and matrix multiplication, and finds applications in computer vision and large-scale supervised learning. 
\end{abstract}



\section{Introduction}
Randomised algorithms for matrix operations are in  general `pass-efficient', and are primarily aimed at problems involving massive data sets that are otherwise cumbersome to process with deterministic algorithms. Pass-efficient implies that the algorithm necessitates only a very small number of passes through the complete data set, but for the cases we consider here such a pass maybe turn out to be impractical due to memory or time restrictions.  In matrix multiplication for example, the B{\scriptsize ASIC}M{\scriptsize ATRIX}M{\scriptsize ULTIPLICATION} algorithm \cite{Drineas06} is considered to be the gold standard. Based on a probability assigned to the columns of a matrix $A$, and respectively the rows of a matrix $B$, it approximates the product $AB$ through re-scaling the outer products of some sampled columns of $A$ with the corresponding rows of $B$ via a sampling-and-rescaling matrix operator. Variants of the B{\scriptsize ASIC}M{\scriptsize ATRIX}M{\scriptsize ULTIPLICATION} algorithm were published in  \cite{Eriksson11}, \cite{HI15}, \cite{Wu18}, exploiting different types of information available on the elements of the matrices involved. In particular, the algorithm in \cite{Eriksson11} addresses the case where the probability distributions of the elements are known a priori to devise an importance sampling strategy based on B{\footnotesize ASIC}M{\footnotesize ATRIX}M{\footnotesize ULTIPLICATION} that minimizes the expected value of the variance. The algorithm was shown to be effective when implemented with the optimized sampling probabilities, particularly so in comparison to the estimators resulting from uniform sampling. This result indeed extends B{\footnotesize ASIC}M{\footnotesize ATRIX}M{\footnotesize ULTIPLICATION} to a random variable setting and can be applied to many query matching with information retrieval applications \cite{Eriksson11}. However, designing the optimized probabilities relies exclusively on the knowledge of the probability distributions of the matrix elements, which limits its applicability to the cases where such information is a priori available. Conversely, it can be argued that B{\footnotesize ASIC}M{\footnotesize ATRIX}M{\footnotesize ULTIPLICATION} with uniform probabilities becomes more appealing when dealing with real-time random matrix multiplication tasks, where distributions change dynamically. In batch processing for instance, the task at hand is to evaluate the expectation of the multiplication or indeed a functional of a matrix product at any given time, a formidable task in terms of the required speed and accuracy. To accelerate the time-dependent training of large-scale kernel machines for example, the evaluation of a kernel function is identified as and approximated through the expectation of a random inner product via some randomised feature map \cite{Kar12}, \cite{Rahimi08}. In this case, coupling a standard Monte Carlo method (MC) and B{\footnotesize ASIC}M{\footnotesize ATRIX}M{\footnotesize ULTIPLICATION} with uniform probabilities may satisfy the speed specifications but compromise the accuracy of the result. A more prudent alternative is to employ a multilevel Monte Carlo method, similar to the one developed in \cite{Giles08} instead of MC. 

MLMC was initially conceived for reducing the cost of computing the expected value of a financial derivative whose payoff depends upon the solution of a stochastic differential equation (SDE). The framework in \cite{Giles08} generalizes Kebaier's approach in \cite{Kebaier05} to multiple levels, using a geometric sequence of different time step sizes. In doing so it reduces substantially the computational cost of MC by taking most of the samples on coarse grids at low cost and accuracy, and only a few samples on finer computationally expensive grids that lead to solutions of high accuracy. Over time, MLMC has grown in scope and found a wide range of applications in the broad area of SDEs, SPDEs, for stochastic reaction networks and inverse problems  \cite{Teckentrup}, while further variants have been developed in the form of multilevel quasi-Monte Carlo estimators \cite{Giles09} and multilevel sequential Monte Carlo samplers \cite{Beskos}. For an overview on MLMC we refer the reader to the excellent survey \cite{Giles15}. Therein the author emphasizes that the multilevel theorem allows to use other estimators as long as they satisfy some specific conditions. This theorem lays the foundation for the algorithm proposed in this paper. A closely related work \cite{Bierig} considers the MLMC estimate for approximating the mean field of a nonlinear PDE, providing a theoretical framework in separable Hilbert spaces. Although there is clearly no actual time stepsize in the matrix multiplication context, we can draw an analogy between the term \emph{time stepsize} in numerical analysis for differential equations and the term \emph{the size of the sampled index set} in randomised linear algebra. As anticipated in a convergent MLMC scheme, the numerical estimation error shrinks with decreasing time stepsize. Similarly, due to the law of large numbers, increasing the size of index samples will decrease the expected squared Frobenius approximation error as shown in Lemma 4 of the seminal work \cite{Drineas06}. Therefore we claim that a random strategy for matrix multiplication with fewer index samples is analogous to using a ``coarser grid" in the PDE setting. This observation is crucial to our construction of MLMC estimators for matrix multiplication.

In Section \ref{sec:inner} below we begin by discussing the simpler case of calculating `on the fly' the expectation of the inner product between large random vectors. We first consider the B{\footnotesize ASIC}M{\footnotesize ATRIX}M{\footnotesize ULTIPLICATION} algorithm with uniform probability and proceed to review the main results for the inner product from \cite{Eriksson11}. We then introduce the important quantities \emph{base number} $M\in \mathbb{N}$ and \emph{level size} $L\in\mathbb{N}$ based on which the MLMC estimator (c.f. \eqref{eqn:hatYl} and \eqref{eqn:hatY}) is constructed via inner product approximations with index sample sizes $M^0,M^1,\ldots,M^L$. In this context, the approximation on the `finest grid' corresponds to the inner product realization with $M^L$ samples. Here we note the distinction between samples and indices, in that since we are sampling with replacement, taking $M^L>n$ samples does not imply sampling all $n$ indices. Given that the variance of the approximated inner product is proportional to $M^{-l}$ for $l\in \{1,\ldots,L\}$ (c.f. Theorems \ref{thm:minvar} and Theorem \ref{thm:eqvar}), the complexity of the proposed MLMC estimator for a functional of the inner product conditioned on certain features of the underlying approximation can be treated similarly as the case $\beta=1$ of Theorem 3.1 in \cite{Giles08}. This result is revisited in Theorem \ref{thm:eqvar} where a comparison with standard MC is attempted. Corollary \ref{cor:comp} discusses the computational complexity of our MLMC estimator using Theorem \ref{thm:eqvar}. At the end of Section \ref{sec:inner}, we comment on the optimal choice of base number $M$ following the reasoning in \cite{Giles08}. 

In Section \ref{sec:matrix} we extend our approach to matrix multiplication, adapting Theorems \ref{thm:minvar} and \ref{thm:eqvar} accordingly. It is worth mentioning that, because the approximation error (c.f. Theorems \ref{thm:minvarhigh} and \ref{thm:eqvarhigh}) is measured in expectation as a Frobenius norm, for the analysis the matrices are considered transformed in vector form prompting a new definition of `variance' for the vectorized matrices denoted as $\mathbb{V}_{\Vert}$. Further, Theorem \ref{thm:comphigh} discusses the complexity and Corollary \ref{cor:comphigh} validates the complexity of the MLMC estimator for matrix multiplication. The implementation of our method is presented as Algorithm \ref{alg:MLMCmatrix}. Finally, in Section \ref{sec:num} we present two simple numerical experiments to illustrate the performance of the MLMC estimator in comparison with the standard MC one. By making appropriate choices for $M$ and $L$ parameters, the proposed MLMC estimator outperforms the MC estimator in terms of accuracy as well as speed and computational efficiency.

\section{Inner product} \label{sec:inner}

We define $\mathbf{T}$ as a countable collection of discrete time points and set $t\in \mathbf{T}$. Let $\mathbf{a}(t)$ and $\mathbf{b}(t)$ be two random vectors of length $n$, whose elements are drawn from some unknown, perhaps different, probability distributions, say $\mathbf{a}(t) \sim \mathcal{L}_{\mathbf{a}(t)}$ and $\mathbf{b}(t) \sim \mathcal{L}_{\mathbf{b}(t)}$. Here and throughout this paper, $n$ is assumed to be extremely large such that evaluating the inner product of $\mathbf{a}(t)^T\mathbf{b}(t)$ is deemed impractical if at all possible. Consider that there is a need to compute $\mathbb{E}_{\mathbf{a}(t),\mathbf{b}(t)}[f(\mathbf{a}(t)^T\mathbf{b}(t))]$ on demand, at different times, where $f$ is a Lipchitz function with Lipchitz constant $C_f$ and $\mathbb{E}_{\mathbf{a}(t),\mathbf{b}(t)}$ is the expectation under $\mathcal{L}_{\mathbf{a}(t)}$ and $\mathcal{L}_{\mathbf{b}(t)}$. For the sake of notational simplicity, the argument $(t)$ is suppressed in the notation but assumed implicitly in all of the quantities introduced above. 

Indeed the task at hand consists of two main parts: approximating $\mathbf{a}^T\mathbf{b}$ in an efficient and accurate manner and approximating its expected value in the spirit of Monte Carlo methods. To tackle the first issue, the random sampling method for inner product estimation presents a viable option. Suppose there is a sampling distribution $\xi:=\{\xi_j\}_{j=1}^{n}$ with $\sum_{j=1}^n \xi_j=1$ such that each index $j\in [n]$, where $[n]:=\{1,2,\ldots,n\}$, can be drawn with an assigned positive probability $\xi_j$. Further suppose we {\color{black}\emph{fix}} a `base' number $M\in \mathbb{N}$ and collect $M^L$, $L\in \mathbb{N}$, independent and identically distributed index samples as {\color{black}an index sequence $(r_1,\ldots, r_{M^L})$} according to $\xi$. We shall refer to these collected $M^L$ indices,{\color{black} or equivalently, the sequence $(r_1,\ldots, r_{M^L})$,} as a sample \emph{realisation}. Then denote by $S_L$ the \emph{sampling-and-rescaling matrix} of size $n\times M^L$ such that elements of $\mathbf{a}$ and $\mathbf{b}$ at the $M^L$ index samples will be used for approximating the inner product of $\mathbf{a}^T\mathbf{b} ${\color{black}. That} is,
\begin{align}\label{eqn:innersketch}
\widehat{{\mathbf{a}^T\mathbf{b}}}=\mathbf{a}^TS_L S^T_L\mathbf{b} =\frac{1}{M^L}\sum_{i=1}^{M^L} \frac{1}{\xi_{r_i}}\mathbf{a}_{r_i}\mathbf{b}_{r_i}{\color{black}:=X_{L}(\xi),} 
\end{align}
{\color{black}where $X_{L}(\xi)$ denotes a scalar random variable that approximates the target $\mathbf{a}^T\mathbf{b}$ using $M^L$ samples from $\xi$, emphasizing its dependence on $\xi$.}
{\color{black} Previous research have shown that $X_{L}(\xi)$ is an unbiased estimator of ${\mathbf{a}^T\mathbf{b}}$ under the sampling distribution $\xi$}. The performance of the approximation {\color{black} when the vector elements $\mathbf{a}$ and $\mathbf{b}$ are known only up to their distributions} can be assessed through quantifying the variance of the estimator. The minimum variance is attained when sampling according to the distribution given by the following theorem from \cite{Eriksson11}.  
\begin{theorem}\label{thm:minvar}
If the vector elements $\mathbf{a}_j$ and $\mathbf{b}_j$ are independent random variables, $j\in [n]$, with finite and nonzero moments $\mathbb{E}_{\mathbf{a},\mathbf{b}}[\mathbf{a}_j^2\mathbf{b}_j^2]$, then the probability $\xi^{*}$ with elements
\begin{align}\label{eqn:optpro}
    \xi^{*}_j=\frac{\sqrt{\mathbb{E}_{\mathbf{a},\mathbf{b}}[\mathbf{a}_j^2\mathbf{b}_j^2]}}{\sum_{i=1}^n\sqrt{\mathbb{E}_{\mathbf{a},\mathbf{b}}[\mathbf{a}_i^2\mathbf{b}_i^2]}},
\end{align}
minimizes the expected value of the variance in \eqref{eqn:innersketch}, that is,
\begin{align}\label{eqn:minvar}
   {\color{black}   \min_{\xi}   \mathbb{E}_{\mathbf{a},\mathbf{b}}[\textbf{Var}[X_{L}(\xi)]]= \mathbb{E}_{\mathbf{a},\mathbf{b}}[\textbf{Var}[X_{L}(\xi^{*})]]}:=\frac{\mu}{M^L},
\end{align}
where {\color{black} $\textbf{Var}$ is the variance under $\xi$ and} $\mu=\mathbb{E}_{\mathbf{a},\mathbf{b}}\Big[\sum_{i=1}^n\frac{\mathbf{a}_i^2\mathbf{b}_i^2}{\xi^{*}_i}-(\mathbf{a}^T\mathbf{b})^2\Big]$.
\end{theorem}
Sampling with $\xi^{*}$ is clearly not practical when we have no knowledge about the distributions of $\mathbf{a}$ and $\mathbf{b}$ in advance, hence a plausible convenient alternative is to use a uniform probability over the index set
\begin{align}\label{eqn:unique}
\xi^u_j=\frac{1}{n},\ \ \  j\in [n],   
\end{align}
with variance as follows.
\begin{theorem}\label{thm:eqvar}\cite{Eriksson11}
Assume the same setting as in Theorem \ref{thm:minvar} but with probability $\xi^u$ defined in \eqref{eqn:unique}, then the variance is 
\begin{align}\label{eqn:eqvar}
    {\color{black}  \mathbb{E}_{\mathbf{a},\mathbf{b}}[\textbf{Var}[X_{L}(\xi^u)]]=\mathbb{E}_{\mathbf{a},\mathbf{b}}[\textbf{Var}[X_{L}(\xi^{*})]]}+\frac{n\nu}{M^L}=\frac{n\nu+\mu}{M^L},
\end{align}
where 
$$\nu=\sum_{i=1}^n\Big(\sqrt{\mathbb{E}_{\mathbf{a},\mathbf{b}}[\mathbf{a}_i^2\mathbf{b}_i^2]}-\frac{1}{n}\sum_{j=1}^n\sqrt{\mathbb{E}_{\mathbf{a},\mathbf{b}}[\mathbf{a}^2_j\mathbf{b}^2_j]}\Big)^2.$$
\end{theorem}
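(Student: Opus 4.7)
The plan is to apply the same variance computation that underlies Theorem \ref{thm:minvar} but with $\xi$ replaced by the uniform law $\xi^u$, and then compare the resulting expression term-by-term with $\mu/M^l$. Specifically, I first note that, since $\widehat{\mathbf{a}^T\mathbf{b}}$ in \eqref{eqn:innersketch} is the average of $M^L$ i.i.d.\ copies of $Y=\mathbf{a}_{r}\mathbf{b}_{r}/\xi_{r}$ (one per index draw), for any sampling probability $\xi$ we have
\begin{align*}
\mathbb{V}_\xi[\widehat{\mathbf{a}^T\mathbf{b}}]=\frac{1}{M^L}\Big(\sum_{i=1}^n\frac{\mathbf{a}_i^2\mathbf{b}_i^2}{\xi_i}-(\mathbf{a}^T\mathbf{b})^2\Big),
\end{align*}
which is precisely the quantity minimized in Theorem \ref{thm:minvar} over $\xi$. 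Substituting $\xi=\xi^u$ with $\xi^u_i=1/n$ and taking $\mathbb{E}_{\mathbf{a},\mathbf{b}}$ yields
\begin{align*}
\mathbb{E}_{\mathbf{a},\mathbf{b}}[\mathbb{V}_{\xi^u}[\widehat{\mathbf{a}^T\mathbf{b}}]]=\frac{1}{M^L}\Big(n\sum_{i=1}^n\mathbb{E}_{\mathbf{a},\mathbf{b}}[\mathbf{a}_i^2\mathbf{b}_i^2]-\mathbb{E}_{\mathbf{a},\mathbf{b}}[(\mathbf{a}^T\mathbf{b})^2]\Big).
\end{align*}

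Next, I would compute the corresponding quantity for $\xi^*$ explicitly, so that the difference between the two variances isolates $n\nu$. Writing $s_i:=\sqrt{\mathbb{E}_{\mathbf{a},\mathbf{b}}[\mathbf{a}_i^2\mathbf{b}_i^2]}$, the optimal probability is $\xi^*_i=s_i/\sum_j s_j$, so that
\begin{align*}
\mathbb{E}_{\mathbf{a},\mathbf{b}}\Big[\sum_{i=1}^n\frac{\mathbf{a}_i^2\mathbf{b}_i^2}{\xi^*_i}\Big]=\sum_{i=1}^n s_i\sum_{j=1}^n s_j=\Big(\sum_{i=1}^n s_i\Big)^2,
\end{align*}
whence $\mu=\big(\sum_i s_i\big)^2-\mathbb{E}_{\mathbf{a},\mathbf{b}}[(\mathbf{a}^T\mathbf{b})^2]$. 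Subtracting the two expressions cancels the $\mathbb{E}_{\mathbf{a},\mathbf{b}}[(\mathbf{a}^T\mathbf{b})^2]$ term and reduces the claim to verifying the purely algebraic identity
\begin{align*}
n\sum_{i=1}^n s_i^2-\Big(\sum_{i=1}^n s_i\Big)^2=n\sum_{i=1}^n\Big(s_i-\tfrac{1}{n}\sum_{j=1}^n s_j\Big)^2.
\end{align*}

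This is just the standard sum-of-squared-deviations identity (expand the square on the right and use $\sum_i(s_i-\bar s)=0$ with $\bar s=n^{-1}\sum_j s_j$). Plugging this in yields $\mathbb{E}_{\mathbf{a},\mathbf{b}}[\mathbb{V}_{\xi^u}]=\mathbb{E}_{\mathbf{a},\mathbf{b}}[\mathbb{V}_{\xi^*}]+n\nu/M^L$, which is \eqref{eqn:eqvar}. I do not expect any genuine obstacle here: the theorem is essentially an explicit algebraic decomposition, and the only care required is in recording the general variance formula for arbitrary $\xi$ (as opposed to the minimum value stated in Theorem \ref{thm:minvar}) before specializing to $\xi^u$ and $\xi^*$.
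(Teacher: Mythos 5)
Your derivation is correct: the general variance formula $\mathbb{V}_{\xi}[\widehat{\mathbf{a}^T\mathbf{b}}]=\frac{1}{M^L}\big(\sum_i \mathbf{a}_i^2\mathbf{b}_i^2/\xi_i-(\mathbf{a}^T\mathbf{b})^2\big)$, the evaluation at $\xi^u$ and $\xi^*$, and the sum-of-squared-deviations identity $n\sum_i s_i^2-\big(\sum_i s_i\big)^2=n\sum_i(s_i-\bar s)^2$ together give exactly \eqref{eqn:eqvar}, and this is the same argument the paper relies on (it omits the proof, deferring to the cited results of Drineas et al.\ and Eriksson-Bique et al., which proceed in precisely this way). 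The only discrepancy is notational: the statement writes $M^l$ in the denominator while the sketch \eqref{eqn:innersketch} uses $M^L$ indices, a typo in the paper that your consistent use of $M^L$ (or of a generic level $l$) handles correctly.
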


Typically one may consider approximating the expectation using a standard MC method that simulates $\mathbb{E}_{\mathbf{a},\mathbf{b}}[f(\mathbf{a}^T\mathbf{b})]$. In this instance, the quantity of interest, say $P$, can then be estimated by \eqref{eqn:innersketch} with a uniform  probability \eqref{eqn:unique} and MC as
\begin{align}\label{eqn:P}
\begin{split}
\mathbb{E}_{\mathbf{a},\mathbf{b}}[P]:=\mathbb{E}_{\mathbf{a},\mathbf{b}}[f(\mathbf{a}^T\mathbf{b})]  \approx\frac{1}{N}\sum_{k=1}^N f\big((\mathbf{a}^{(k)})^T\mathbf{b}^{(k)} \big){\color{black}=\frac{1}{N}\sum_{k=1}^N f\big(X^{(k)}_{L}(\xi^u) \big)}:=\hat{P},
\end{split}
\end{align}
where $N$ is the number of realisations for $M^L$ many index samples {\color{black} or equivalently, an index sequence of length $M^L$}.  In this case the mean square error (MSE) for the estimate $\hat P$ turns out to be
\begin{align}\label{eqn:mse1}
\begin{split}
\mathbb{E}\big[\big(\hat P-\mathbb{E}[P]\big)^2\big]& =\mathbb{E}\big[\big(\hat P-\mathbb{E}[\hat P]\big)^2\big]+\big(\mathbb{E}[P]-\mathbb{E}[\hat P]\big)^2\\
&{\color{black}=\mathbb{E}\big[\big(\hat P-\mathbb{E}[\hat P]\big)^2\big]+\big(\mathbb{E}[f(\mathbf{a}^T\mathbf{b})-f(X_{L}(\xi^u))]\big)^2},
\end{split}
\end{align}
where $\mathbb{E}$, and also $\mathbb{V}$ that appears in the sequel, (without subscripts) denote respectively the expectation and the variance under $\mathcal{L}_{\mathbf{a}}$, $\mathcal{L}_{\mathbf{b}}$ and $\xi^{u}$. The last term in \eqref{eqn:mse1}, for a fixed $L$, characterizes the bias and can be bounded by
\begin{align*}
&{\color{black}\big(\mathbb{E}[f(\mathbf{a}^T\mathbf{b})-f(X_{L}(\xi^{u}))]\big)^2  \leq \mathbb{E}\big[\big(f(\mathbf{a}^T\mathbf{b})-f(X_{L}(\xi^{u}))\big)^2\big]}\\
&{\color{black}=\mathbb{E}\big[\big(f(\mathbf{a}^T\mathbf{b})-f(\mathbf{a}^TS_L S_L^T \mathbf{b})\big)^2\big]\leq C_f^2 \mathbb{E}[|\mathbf{a}^T(I-S_L S_L^T) \mathbf{b}|^2]\sim \mathcal{O}(M^{-L}),}
\end{align*}
where {\color{black} the first inequality comes from Jensen's inequality, that is $\mathbb{E}[X]^2\leq \mathbb{E}[X^2]$ for arbitrary random variable $X$,} the second inequality is due to the Lipchitz continuity of $f$ and the last one due to \eqref{eqn:eqvar}.
The first term in \eqref{eqn:mse1} is simply the variance from the MC simulation and can be bounded in terms of $N$ as
\begin{align}\label{eqn:VhatP}
\begin{split}
& \mathbb{E}\big[\big(\hat P-\mathbb{E}[\hat P]\big)^2\big] {\color{black} =\mathbb{V}[\hat P] =\frac{1}{N}\mathbb{V}[f(X_{L}(\xi^u))]}\\
&\leq \frac{1}{N}\Big(\mathbb{V}[f({\color{black}X_{L}(\xi^u))}-f(\mathbf{a}^T\mathbf{b})]^{\frac{1}{2}}+\mathbb{V}[f(\mathbf{a}^T\mathbf{b})]^{\frac{1}{2}}\Big)^2\\
&\leq \frac{ 1}{N}\Big(\frac{C_f}{M^{\frac{L}{2}}}(n\nu+\mu)^{\frac{1}{2}}+\mathbb{V}_{\mathbf{a},\mathbf{b}}[f(\mathbf{a}^T\mathbf{b})]^{\frac{1}{2}}\Big)^2\sim \mathcal{O}(N^{-1}).
\end{split}
\end{align}
Overall, as in \cite{Giles08}, the MSE varies in terms of $\frac{1}{M^L}$ and $\frac{1}{N}$. This is still true even if we sample based on the optimal sampling probability \eqref{eqn:optpro}. Meanwhile, the complexity is in terms of $NM^L$, for an integer $N$ to be determined.

Alternatively, it may be possible to obtain the same accuracy at a reduced computational cost, by considering a multilevel MC simulation \cite{Giles08}. For $l\in[L]\bigcup \{0\}$ define as $\hat P_{l}$ the approximation to $f(\mathbf{a}^T\mathbf{b})$ from $M^l$ sampled indices. Further define $\hat Y_l$ as an estimator of $\mathbb{E}[\hat P_l-\hat P_{l-1}]$ using $N_l$ realizations with $l>0$ and similarly $\hat Y_0$ the estimator of $\mathbb{E}[\hat P_0]$ using $N_0$ samples, that is
\begin{align}\label{eqn:hatYl}
\hat Y_l:=\frac{1}{N_l}\sum_{k=1}^{N_l}(\hat P_l^{(k)}-\hat P_{l-1}^{(k)}).
\end{align}
A key point to note is that both $\hat P_l^{(k)}$ and $\hat P_{l-1}^{(k)}$ emerge from the \emph{same} realization, as we discuss in more detail when we describe our Algorithm \ref{alg:Yl}. By the linear property of the expectation it follows immediately that
\begin{align}\label{eqn:hatY}
\mathbb{E}[\hat P_{L}]=\mathbb{E}[\hat P_{0}]+\sum_{l=1}^L \mathbb{E}[\hat P_{l}-\hat P_{l-1}]\approx \hat Y_0+\sum_{l=1}^L \hat Y_l:=\hat Y,
\end{align}
where clearly $\mathbb{E}[\hat P_L]=\mathbb{E}[\hat Y]$. To investigate the performance of the proposed MLMC estimator $\hat Y$ in \eqref{eqn:hatY} we compare the complexity of two estimators $\hat Y$ and $\hat P$ at the same accuracy level.

\begin{theorem}\label{thm:comp} Let $\mathbf{a}$ and $\mathbf{b}$ be two random vectors with length $n$ drawn from different unknown distributions, that is $\mathbf{a}\sim \mathcal{L}_{\mathbf{a}}$ and  $\mathbf{b} \sim \mathcal{L}_{\mathbf{b}}$, and let $f:\mathbb{R}\to \mathbb{R}$ be a Lipschitz function with Lipschitz number $C_f$. Denote by $P$ the term of interest as in \eqref{eqn:P}, and define $\hat P_l$ the corresponding approximation to $f(\mathbf{\mathbf{a}^T\mathbf{b}})$ based on the sketched version of matrix multiplication via $M^l$ many index samples like in \eqref{eqn:innersketch}.

\begin{enumerate} 
\item If there exist independent estimators $\hat Y_l$ as in \eqref{eqn:hatYl} based on $N_l$ Monte Carlo samples, and positive constants $c_1$, $c_2$, $c_3$ such that
\begin{enumerate}
    \item $\mathbb{E}[\hat P_l -P]\leq c_1M^{-\frac{l}{2}}$,
    \item $\mathbb{V}[\hat Y_l]\leq c_2N_l^{-1}M^{-l}$,
    \item the complexity of $\hat Y_l$, denoted by $C_l$, is bounded by $C_l\leq c_3 N_l M^l$,
\end{enumerate}
then there exists a positive constant $c_4$ such that for $\epsilon< e^{-1}$, there are values $L$ and $N_l$ for which the multilevel estimator $\hat Y=\sum_{l=0}^L\hat Y_l$
has an MSE $\mathbb{E}[(\hat Y-P)^2]$ with bound $\epsilon^2$, and computational complexity $$C(\hat Y):=\sum_{l=0}^L C_l\leq c_4\epsilon^{-2}(\log \epsilon)^2.$$

\item Furthermore, define the estimator based on the finest level $L$ and $N$ realisations as in \eqref{eqn:P} with either the optimal sampling probability \eqref{eqn:optpro} (if tractable) or the uniform probability \eqref{eqn:unique}, and suppose 
\begin{enumerate}
    \item the variance for $\hat P$ is bounded by the same constant $c_2$, i.e., $\mathbb{V}[\hat P]\leq c_2N^{-1}$,
    \item the complexity for $\hat P$ is bounded by the same constant $c_3$, i.e., $C(\hat P)\leq c_3NM^L$,
\end{enumerate}
then at the same accuracy $\epsilon^2$, $C(\hat P)\leq c_6 \epsilon^{-4}$, which is much larger than the upper bound of $C(\hat Y)$ when $\epsilon$ is sufficiently small.
\end{enumerate}
\end{theorem}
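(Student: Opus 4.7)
The plan is to follow Giles' classical MLMC complexity argument (specifically the $\beta=\gamma=1$ regime of Theorem~3.1 in \cite{Giles08}), tailored to the bias/variance rates supplied by hypotheses 1--3 of the statement. I would start from the standard bias--variance decomposition
$$\mathbb{E}[(\hat Y-P)^2]=\mathbb{V}[\hat Y]+\bigl(\mathbb{E}[\hat P_L]-P\bigr)^2,$$
which uses the identity $\mathbb{E}[\hat Y]=\mathbb{E}[\hat P_L]$ implied by the telescoping construction \eqref{eqn:hatY}, and split the total budget $\epsilon^2$ equally between these two terms.

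To control the bias I would apply hypothesis~1 in the form $(\mathbb{E}[\hat P_L]-P)^2\le c_1^2 M^{-L}\le \epsilon^2/2$, which is satisfied as soon as $L=\lceil 2\log_M(c_1\sqrt{2}/\epsilon)\rceil$, so that $L=\mathcal{O}(\log \epsilon^{-1})$ and $M^L=\mathcal{O}(\epsilon^{-2})$. For the variance, independence of the levels gives $\mathbb{V}[\hat Y]=\sum_{l=0}^{L}\mathbb{V}[\hat Y_l]\le c_2\sum_{l=0}^{L}N_l^{-1}M^{-l}$, and I would minimise the total cost $\sum_l c_3 N_l M^l$ subject to the constraint $\sum_l c_2 N_l^{-1}M^{-l}\le \epsilon^2/2$ by a Lagrange multiplier. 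The first-order conditions yield $N_l\propto\sqrt{V_l/C_l'}=M^{-l}$, and matching the constraint fixes
$$N_l=\Bigl\lceil\,2c_2(L+1)\,M^{-l}\,\epsilon^{-2}\Bigr\rceil.$$
Substituting this choice back into the cost and using hypothesis~3 produces $C(\hat Y)\le 2c_2c_3(L+1)^2\epsilon^{-2}+c_3\sum_{l=0}^{L}M^l$; the residual geometric sum is $\mathcal{O}(M^L)=\mathcal{O}(\epsilon^{-2})$ and is absorbed into the leading term once $\epsilon<e^{-1}$ guarantees $(\log\epsilon)^2\ge 1$, delivering the advertised $c_4\epsilon^{-2}(\log\epsilon)^2$ bound.

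For the comparison with the single-level estimator $\hat P$, the same decomposition gives bias squared at most $c_1^2 M^{-L}$ and variance at most $c_2/N$; enforcing MSE $\le\epsilon^2$ forces both $M^L\gtrsim\epsilon^{-2}$ and $N\gtrsim\epsilon^{-2}$, so by the second set of hypotheses $C(\hat P)\le c_3 NM^L=\mathcal{O}(\epsilon^{-4})$, from which one reads off $c_6$. The principal technical obstacle is not the optimisation itself but the bookkeeping around the integer ceilings in $N_l$ and the integer choice of $L$: the ceiling inflates the cost by an additive $c_3\sum_l M^l$ term that must be shown to be dominated by the main $(L+1)^2\epsilon^{-2}$ contribution, and the condition $\epsilon<e^{-1}$ is precisely what makes the resulting constants clean and allows all level-dependent overheads to be collapsed into a single $c_4$.
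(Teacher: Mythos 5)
Your proposal is correct and follows essentially the same route as the paper: the same bias--variance split, the same choices $L=\lceil\log_M(2c_1^2\epsilon^{-2})\rceil$ and $N_l=\lceil 2c_2(L+1)M^{-l}\epsilon^{-2}\rceil$, the same absorption of the ceiling-induced geometric sum $\mathcal{O}(M^L)=\mathcal{O}(\epsilon^{-2})$ into the $c_4\epsilon^{-2}(\log\epsilon)^2$ bound, and the same $N\approx 2c_2\epsilon^{-2}$, $M^L=\mathcal{O}(\epsilon^{-2})$ comparison giving $C(\hat P)=\mathcal{O}(\epsilon^{-4})$. The only cosmetic difference is that you motivate $N_l$ via a Lagrange-multiplier optimisation, whereas the paper (following Giles) simply posits this choice and verifies the variance constraint.
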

\begin{proof} 
\begin{enumerate}
\item 
The proof is based on \cite{Giles08}. Accordingly, the MSE for $\hat Y$ is 
\begin{align*}
&\mathbb{E}\big[(\mathbb{E}[P]-\hat Y)^2\big]=(\mathbb{E}[P]-\mathbb{E}[\hat Y])^2+\mathbb{E}\big[\big(\hat Y-\mathbb{E}\big[\hat Y]\big)^2\big]\\
&=(\mathbb{E}[P]-\mathbb{E}[\hat P_L])^2+\mathbb{V}[\hat Y],
\end{align*}
where $L$ is to be determined. If choosing the ceiling
\begin{align}\label{eqn:L}
    L=\big\lceil \frac{\log(2c_1^2\epsilon^{-2})}{\log M} \big\rceil,
\end{align}
then its bias component can be bounded via condition 1.(a)-(b) as
\begin{align*}
 \bigl (\mathbb{E}[P]-\mathbb{E}[\hat P_L] \bigr )^2\leq c_1^2 M^{-L}\leq \frac{1}{2}\epsilon^2.
\end{align*}

On the other hand, choosing 
\begin{align}\label{eqn:Nl}
  N_l=\lceil 2(L+1)c_2\epsilon^{-2} M^{-l}\big\rceil  
\end{align}
together with condition 1.(b) gives that
\begin{align*}
\mathbb{V}[\hat Y] & \leq \sum_{l=0}^L\mathbb{V}[\hat Y_l]\leq c_2\sum_{l=0}^L N_l^{-1}M^{-l}\\
&\quad \leq c_2\sum_{l=0}^L \big(2(L+1)c_2\epsilon^{-2} M^{-l}\big)^{-1}M^{-l}\\
&\quad = c_2\sum_{l=0}^L \frac{\epsilon^2}{2(L+1)c_2}=\frac{1}{2}\epsilon^2.
\end{align*}
To bound the complexity $C$, let us first find the bound for $L$ in terms of $\log \epsilon^{-1}$. Indeed, $L+1$, defined in \eqref{eqn:L} is bounded by
\begin{align}\label{eqn:L+1}
    L+1\leq  \frac{2\log(\epsilon^{-1})}{\log M}+ \frac{\log(2c_1^2)}{\log M}+2\leq c_5 \log \epsilon^{-1},
\end{align}
where $c_5=\frac{1+\big(0\vee\log(2c_1^2)\big) }{\log M}+2$ given that $\log \epsilon^{-1}>1$ ($\epsilon\leq e^{-1}$). 
Besides, from \eqref{eqn:L} we can get an upper bound for $M^{L-1}$ as 
\begin{align}\label{eqn:ML-1}
M^{L-1}\leq M^{\frac{\log(2c_1^2\epsilon^{-2})}{\log M}}=e^{\log M \frac{\log(2c_1^2\epsilon^{-2})}{\log M}} =2c_1^2\epsilon^{-2}.
\end{align}
Therefore the computational complexity $C$ is bounded through
\begin{align*}
    &C\leq c_3\sum_{l=0}^LN_lM^{l}\leq c_3\sum_{l=0}^L\big( 2(L+1)c_2\epsilon^{-2} M^{-l}+1\big)M^{l}\\
    &\quad = c_3 \Big(2(L+1)^2c_2\epsilon^{-2}+\frac{M^2M^{L-1}-1}{M-1}\Big)\leq c_4\epsilon^{-2}(\log \epsilon)^2,
\end{align*}
where $c_4=2c_2c_3c_5^2+\frac{2c_3c_1^2M^2}{M-1}$.

\item For both estimators $\hat Y$ and $\hat P$, the bias is fixed for the same choice of $L$ in \eqref{eqn:L}. Now let us choose an appropriate $N$ such that $\mathbb{V}[\hat{P}]\leq \frac{1}{2}\epsilon^2$. Let $N=\lceil 2c_2\epsilon^{-2}\rceil$ to meet the accuracy specification, and recall the upper bound for $M^{L-1}$ in \eqref{eqn:ML-1}. Then the complexity $C(\hat P)$ is 
\begin{align*}
    C(\hat P)\leq c_3NM^L\leq c_3(2c_2\epsilon^{-2}+1)M^22c_1^2\epsilon^{-2}\leq c_6 \epsilon^{-4},
\end{align*}
where $c_6=2c_1^2c_3M^2(2c_2+e^{-2})$.
\end{enumerate}
\end{proof}
The application of Theorem \ref{thm:comp} relies on its conditions being verified. This is explored in the form of the following corollary. 
\begin{corollary}\label{cor:comp}
Assume the setting in Theorem \ref{thm:comp} {\color{black} and choose a uniform sampling distribution $\xi^u$ as in \eqref{eqn:unique}}. Then we have
\begin{enumerate}
    \item $c_1=C_f^2(n\nu+\mu)$,
    \item $c_2=2C_f^2(M+1)(n\nu+\mu)+2\mathbb{V}_{\mathbf{a},\mathbf{b}}[P]$,
    \item $c_3=1+M^{-1}$.
\end{enumerate}
\end{corollary}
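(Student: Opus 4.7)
My approach is to treat the corollary as simply unpacking the abstract constants of Theorem~\ref{thm:comp} in the specific inner-product setting, so every estimate should reduce to three ingredients already on display in Section~\ref{sec:inner}: Jensen's inequality, the Lipschitz bound $|f(x)-f(y)|\leq C_f|x-y|$, and the uniform-sampling variance bound $\mathbb{E}[|\mathbf{a}^T(I-S_lS_l^T)\mathbf{b}|^2]\leq (n\nu+\mu)/M^l$ from Theorem~\ref{thm:eqvar}. I would verify the three claims in increasing order of difficulty: $c_3$, then $c_1$, then $c_2$.

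For $c_3$, I would just count operations: computing $\hat P_l^{(k)}-\hat P_{l-1}^{(k)}$ from a single $M^l$-index realisation costs the $M^l$ terms that form the fine sketch plus the $M^{l-1}$ terms of the nested coarse sketch (both sketches share the same index draw, as emphasised just below \eqref{eqn:hatYl}), giving $N_l(M^l+M^{l-1})=N_l(1+M^{-1})M^l$ in total; the case $l=0$ costs only $N_0$, which is dominated by the same bound. For $c_1$ I would recycle verbatim the three-line chain sitting between \eqref{eqn:mse1} and \eqref{eqn:VhatP}: Jensen converts $(\mathbb{E}[\hat P_l-P])^2$ into $\mathbb{E}[(\hat P_l-P)^2]$, Lipschitz pulls out $C_f^2$ and replaces the $f$-values by inner products, and Theorem~\ref{thm:eqvar} then closes the bound by $C_f^2(n\nu+\mu)/M^l$, which is the stated value of $c_1$ (up to the squaring convention used in condition~1 of Theorem~\ref{thm:comp}).

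The only claim needing real work is $c_2$, and the main (albeit mild) obstacle is that a single constant has to dominate $\mathbb{V}[\hat Y_l]$ for every $l\in\{0,\ldots,L\}$, even though the $l=0$ and $l\geq 1$ cases have different structure. For $l\geq 1$ I would exploit the i.i.d.\ structure of the $N_l$ realisations to write $\mathbb{V}[\hat Y_l]=N_l^{-1}\mathbb{V}[\hat P_l-\hat P_{l-1}]$, then apply the triangle inequality for standard deviation about the common reference $P=f(\mathbf{a}^T\mathbf{b})$, yielding $\mathbb{V}[\hat P_l-\hat P_{l-1}]^{1/2}\leq \mathbb{V}[\hat P_l-P]^{1/2}+\mathbb{V}[\hat P_{l-1}-P]^{1/2}$; the same Jensen--Lipschitz--Theorem~\ref{thm:eqvar} chain used for $c_1$ bounds each summand by $C_f(n\nu+\mu)^{1/2}M^{-l/2}$, and after the elementary $(a+b)^2\leq 2(a^2+b^2)$ the factor $M^{-l}+M^{-(l-1)}=(1+M)M^{-l}$ produces the $2C_f^2(M+1)(n\nu+\mu)$ piece. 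Note that this route never exploits the coupling between $\hat P_l^{(k)}$ and $\hat P_{l-1}^{(k)}$ through their covariance, which is what makes the bound cheap to write down; a tighter bound using that coupling would only improve constants, not rates. The level $l=0$ needs a different reference because there is no $\hat P_{-1}$: applying the triangle inequality around $P$ directly gives $\mathbb{V}[\hat P_0]\leq 2C_f^2(n\nu+\mu)+2\mathbb{V}_{\mathbf{a},\mathbf{b}}[P]$, and this is precisely the source of the additive $2\mathbb{V}_{\mathbf{a},\mathbf{b}}[P]$ in $c_2$. Since $M+1\geq 1$, enlarging the $l\geq 1$ bound by $2\mathbb{V}_{\mathbf{a},\mathbf{b}}[P]$ covers both regimes uniformly, yielding exactly the stated $c_2$.
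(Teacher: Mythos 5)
Your proposal is correct and follows essentially the same route as the paper's proof: Jensen's inequality plus the Lipschitz property plus the uniform-sampling bound of Theorem \ref{thm:eqvar} for $c_1$, the standard-deviation triangle inequality about $P$ (with the separate $l=0$ case supplying the $2\mathbb{V}_{\mathbf{a},\mathbf{b}}[P]$ term) for $c_2$, and the $N_l(M^l+M^{l-1})$ operation count for $c_3$. The only extra item in the paper is a one-line remark that the single-level estimator $\hat P$ obeys the same $c_2$ and $c_3$ bounds (via \eqref{eqn:VhatP} and $C(\hat P)\leq NM^L$), which your argument covers only implicitly.
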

\begin{proof}

\begin{enumerate}
    \item For any $l\in \mathbb{N}\bigcup \{0\}$ we have that
        \begin{align*}
    &\big(\mathbb{E}[f(\mathbf{a}^T\mathbf{b})]-{\color{black}\mathbb{E}[f(X_{l}(\xi^u))]}\big)^2\leq \mathbb{E}[\big(f(\mathbf{a}^T\mathbf{b})-{\color{black}f(X_{l}(\xi^u))}\big)^2\big]\\
&\leq C_f^2 \mathbb{E}[|\mathbf{a}^T(I-S_l S_l^T) \mathbf{b}|^2]\leq C_f^2 M^{-l}(n\nu+\mu),
\end{align*}
where the last inequality holds because of \eqref{eqn:eqvar}.
    \item For any $l>0$ we have that
    \begin{align*}
&\mathbb{V}[\hat P_l-\hat P_{l-1}]\leq \big(\mathbb{V}[\hat P_l-P]^{\frac{1}{2}}+\mathbb{V}[P-\hat P_{l-1}]^{\frac{1}{2}}\big)^2\\
&\leq \big(\mathbb{E}[(\hat P_l-P)^2]^{\frac{1}{2}}+\mathbb{E}[(\hat P_{l-1}-P)^2]^{\frac{1}{2}}\big)^2\\
&\leq C_f^2\big(\mathbb{E}[|\mathbf{a}^T(I-S_lS^T_l )\mathbf{b}|^2]^{\frac{1}{2}}+\mathbb{E}[|\mathbf{a}^T(I-S_{l-1}S^T_{l-1}) \mathbf{b}|^2]^{\frac{1}{2}}\big)^2\\
&\leq  2C_f^2(M^{-l}+M^{-l+1})(n\nu+\mu) \leq 2C_f^2(M+1)(n\nu+\mu)M^{-l}. 
\end{align*}
For $l=0$ we have that
\begin{align*}
&\mathbb{V}[\hat P_0]={\color{black} \mathbb{V}[f(X_0(\xi))]\leq \big(\mathbb{V}[f(X_0(\xi))-P]^{\frac{1}{2}}}+\mathbb{V}[P]^{\frac{1}{2}}\big)^2\\
&\leq \big(C_f\mathbb{E}[|\mathbf{a}^T(I-S_0S^T_0 )\mathbf{b}|^2]^{\frac{1}{2}}+\mathbb{V}_{\mathbf{a},\mathbf{b}}[P]^{\frac{1}{2}}\big)^2\\
&\leq \big(C_f(n\nu+\mu)^{\frac{1}{2}}+\mathbb{V}_{\mathbf{a},\mathbf{b}}[P]^{\frac{1}{2}}\big)^2\\
&\leq 2C_f^2(n\nu+\mu)+2\mathbb{V}_{\mathbf{a},\mathbf{b}}[P].
\end{align*}
Besides, from \eqref{eqn:VhatP} we can see that $\mathbb{V}[\hat P]$ is bounded by the same $c_2$.
    \item For any $l>0$ we can see easily the complexity is roughly
    \begin{align*}
        C_l\leq N^{l}(M^l+M^{l-1})=(1+M^{-1})N^{1}M^{l},
    \end{align*}
    while 
       \begin{align*}
        C_0\leq N^{0}M^0\leq (1+M^{-1})N^{0}M^{0}.
    \end{align*}
    Besides, we have for the complexity of $\hat P$ that
           \begin{align*}
        C(\hat P)\leq NM^L\leq (1+M^{-1})NM^{L}.
    \end{align*}
Thus $c_3$ can be set as $1+M^{-1}$.
\end{enumerate}
\end{proof}
\begin{remark}\label{rmk:Lbound}
Asymptotically as $l\to \infty$, we have that $\mathbb{E}[P-\hat P_l]\approx c_1 M^{-\frac{l}{2}}$, and hence
$$\mathbb{E}[\hat P_l-\hat P_{l-1}]\approx (\sqrt{M}-1)c_1 M^{-\frac{l}{2}}\approx (\sqrt{M}-1)\mathbb{E}[P-\hat P_{l}].$$
Similarly to the analysis in Section 4.2 of \cite{Giles08}, this information can be used as an approximate bound: $L$ can be set as the smallest integer such that
\begin{equation}\label{eqn:L_num_bound}
|\hat Y_L |<\frac{1}{\sqrt{2}}(\sqrt{M}-1)\epsilon.
\end{equation} 
By doing this, we might achieve a bias bounded by $\frac{\epsilon^2}{2}$ without evaluating $c_1$.
\end{remark}
\begin{remark}[Optimal $N_l$] \label{rmk:Nl}
To achieve a fixed variance, i.e., $\mathbb{V}[\hat Y]<\frac{1}{2}\epsilon$, the optimal $N_l$ can be chosen as
\begin{equation}\label{eqn:optimalNl}
N_l\approx \Big \lceil 2\epsilon^{-2}\sqrt{V_lM^{-l}}\big(\sum_{j=0}^L\sqrt{V_l M^{l} }\big)\Big \rceil,
\end{equation}
where $V_l$ denotes the variance of a single sample $\hat P_l-\hat P_{l-1}$. 
This result is simply an application of Section 1.3 of \cite{Giles15} or Eqn. (12) in \cite{Giles08} to the `stepsize' $M^{-l}$. The estimation for $N_l$ in \eqref{eqn:optimalNl} is conservative and may induce oversampling. In practice, some scaling factor might be introduced to avoid oversampling (see Section \ref{sec:eg1}).
\end{remark}
\subsection{Optimal $M$}\label{sec:optimalM}
This part explores the methods in \cite{Giles08} in order to find an optimal $M$ that reduces the computational complexity of the estimator even further. With $c_2$ given by Corollary \ref{cor:comp}, $L$ and $N_l$ given in the proof of Theorem \ref{thm:comp}, we can express the complexity of $\hat Y$ in terms of $M$ as
\begin{align*}
   & C(\hat Y)\leq \sum_{l=0}^L C_l\approx  \sum_{l=0}^L N_l(M^l+M^{l-1})\stackrel{\eqref{eqn:Nl}}{\approx} \sum_{l=0}^L c_2 (L+1)(M^l+M^{l-1})\epsilon^{-2} M^{-l}\\
    &\stackrel{c_2}{\approx} \sum_{l=0}^L  (L+1)(M+1)^2M^{-1}\epsilon^{-2} =(L+1)^2(M+1)^2 M^{-1} \epsilon^{-2} \\
    &\stackrel{\eqref{eqn:L}}{\approx} M^{-1}(M+1)^2 \log(M)^{-2}\log(\epsilon)^{2}\epsilon^{-2}= g(M)\log(\epsilon)^{2}\epsilon^{-2},
\end{align*}
where 
\begin{equation}\label{fM}
g(M):=M^{-1}(M+1)^2 \log(M)^{-2}.
\end{equation}

As illustrated in figure \ref{fig1} where we plot $g(M)$ against $M$, $g(M)$ drops sharply for $M<6$ and then starts growing slightly again after $M$ going beyond $12$. The minimum (optimum) is attained at $M=11$, however from our experience using either $M=10$ or $M=12$ does not make a significant difference. We remark that our definition of $g(M)$ in \eqref{fM} differs somewhat from that used in \cite{Giles08}, i.e. in the term $(M+1)^2$, but this does not affect the general trend of $g(M)$ as described above. In the numerical experiments of Section \ref{sec:eg1}, a choice of $M=10$ is used as it was deemed appealing in terms of both the performance and time cost.
\begin{figure}
\begin{center}
  \includegraphics[width=0.75\textwidth]{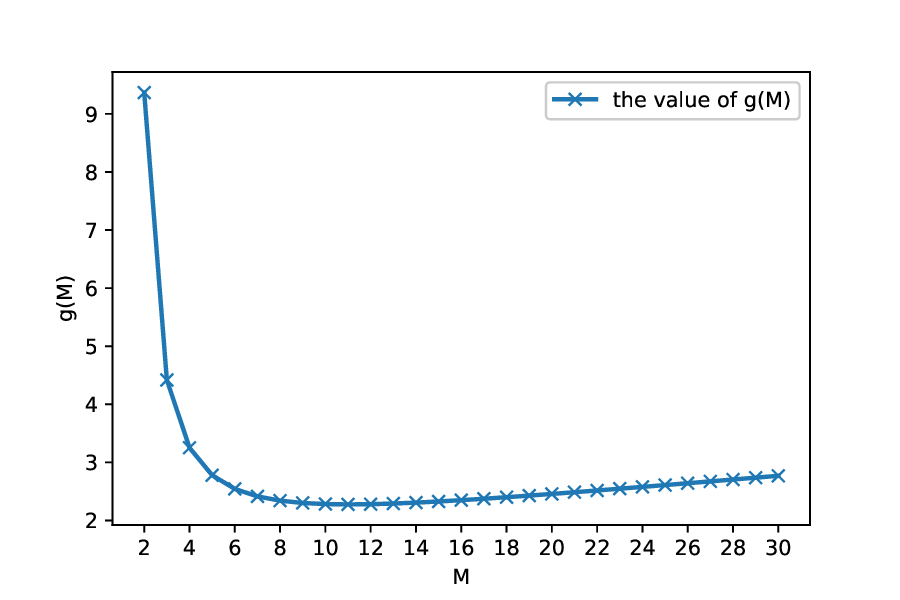} \\
 \caption{The plot of the dominant complexity term $g(M)$ against the base number $M$, indicating the existence of an optimal $M$ at the minimum point.\label{fig1}}
\end{center}
 \end{figure}

\section{Matrix multiplication} \label{sec:matrix}
We now extend our approach to matrix multiplication and thus we consider $A(t)$ and $B(t)$ to be two random matrices of size $m\times n$ and $n\times d$ respectively, $m,n,d\in \mathbb{N}$, drawn from different distributions, elementwise, in the sense $A(t)\sim \mathcal{L}_{A(t)}$ and $B(t) \sim \mathcal{L}_{B(t)}$, and again we suppress $t$ in the notation as in Section \ref{sec:inner} and assume that $n$ is extremely large such that computing directly $AB$ is prohibitively expensive. Recall that $f$ is a Lipchitz function with Lipschitz constant $C_f$, and define $f^{\odot}(AB)$ the elementwise operator on $AB$, that is, 
$$\big(f^{\odot}(AB)\big)_{ik}=f\big((AB)_{ik}\big),\ \ \text{for\ } i\in [m], \text{\ and\ } k\in [d].$$
Once again consider that there is a need to compute $\mathbb{E}_{A,B}[f^{\odot}(AB)]$ where $\mathbb{E}_{A,B}$ is the expectation under $\mathcal{L}_A$ and $\mathcal{L}_B$. 

As in the inner product case, in order to simulate $\mathbb{E}_{A,B}[f^{\odot}(AB)]$ we first approximate $AB$ by random sampling (sketching) for matrix multiplication and then approximate the expectation through a Monte Carlo method. Recall that $\xi:=\{\xi_j\}_{j=1}^{n}$ with $\sum_{j=1}^n \xi_j=1$ is a sampling probability such that an index $j\in [n]$ can be drawn with positive probability $\xi_j$ and $S_L$ a sampling-and-rescaling matrix of size $n\times M^L$ such that 
\begin{align}\label{eqn:matrixsketch}
\widehat{AB}=AS_L S^T_L B =\frac{1}{M^L}\sum_{i=1}^{M^L} \frac{1}{\xi_{r_i}}A_{:,r_i}B_{r_i,:}{\color{black}:=Z_{L}(\xi)}, 
\end{align}
where $A_{:,j}$ indicates the $j$th column of $A$ and $B_{j,:}$ indicates the $j$th row of $B${\color{black}, and $Z_{L}(\xi)$ denotes the matrix-valued random variable that appproximates $AB$ based on $M^L$ indices sampled from $\xi$. 
It is easy to verify that $Z_{L}(\xi)$ is an unbiased estimator under the sampling distribution $\xi$.} 
Besides, following arguments similar to those of the proof of Theorem 2.1 in \cite{Eriksson11} and Lemma 4 in \cite{Drineas06}, we can conclude that the minimum of the expected squared Frobenius error can be achieved by the following result.
\begin{theorem}\label{thm:minvarhigh}
If the matrix elements $A_{ij}$ and $B_{jk}$ are independent random variables, $i\in [m]$, $j\in [n]$ and $k\in [d]$, with finite and nonzero moments $\mathbb{E}_{A}[\|A_{:,j}\|_2^2]$ and $\mathbb{E}_{B}[\|B_{j,:}\|_2^2]$. Then the probability $\xi^{**}$, which is defined as
\begin{align}\label{eqn:optproM}
    \xi^{**}_j=\frac{\sqrt{\mathbb{E}_{A}[\|A_{:,j}\|_2^2]\mathbb{E}_{B}[\|B_{j,:}\|_2^2]}}{\sum_{i=1}^n\sqrt{\mathbb{E}_{A}[\|A_{:,i}\|_2^2]\mathbb{E}_{B}[\|B_{i,:}\|_2^2]}},
\end{align}
minimizes the expected value of the variance in \eqref{eqn:innersketch}, that is,
\begin{align}\label{eqn:minvarhigh}
\begin{split}
  &{\color{black} \min_{\xi}   \mathbb{E}_{A,B}\big[\textbf{E}[\|AB-Z_L(\xi)\|_F^2]\big]= \mathbb{E}_{A,B}\big[\textbf{E}[\|AB-Z_L(\xi^{**})\|_F^2]\big]}\\
   &=\frac{1}{M^L}\Big(\Big(\sum_{j=1}^n\sqrt{\mathbb{E}_{A}[\|A_{:,j}\|_2^2]\mathbb{E}_{B}[\|B_{j,:}\|_2^2]}\Big)^2-\mathbb{E}_{A.B}[\|AB\|_F^2]\Big):=\frac{\bar \mu}{M^L},
   \end{split}
\end{align}
where $\bar \mu=\Big(\sum_{j=1}^n\sqrt{\mathbb{E}_{A}[\|A_{:,j}\|_2^2]\mathbb{E}_{B}[\|B_{j,:}\|_2^2]}\Big)^2-\mathbb{E}_{A,B}[\|AB\|_F^2]$, {\color{black}$\textbf{E}[\cdot]$ denotes the expectation under the distribution $\xi$,} and $\mathbb{E}_{A,B}[\cdot]$ is the expectation with respect to the (element-wise) probabilities of $A$ and $B$.
\end{theorem}
The proof is omitted here as it is quite similar to the proof of Theorem 2.1 in \cite{Eriksson11}. Besides, as discussed in Section \ref{sec:inner}, it is impractical to use $\xi^*$ for random sampling. A simpler option would be to use a uniform probability $\xi^{u}$ as defined in \eqref{eqn:unique}.

\begin{theorem}\label{thm:eqvarhigh}
Assume the same setting as in Theorem \ref{thm:minvarhigh} but with probability $\xi^u$ as defined in \eqref{eqn:unique}, then the expected squared Frobenius error is 
\begin{align}\label{eqn:eqvarhigh}
 {\color{black}   \mathbb{E}_{A,B}\big[\textbf{E}[\|Z_L(\xi^u)-AB\|_F^2]\big]=  \mathbb{E}_{A,B}\big[\textbf{E}[\|Z_L(\xi^{**})-AB\|_F^2]\big]}+\frac{n\bar \nu}{M^l}=\frac{n\bar \nu+\bar \mu}{M^l},
\end{align}

where 
$$\bar \nu=\sum_{i=1}^n\Big(\sqrt{\mathbb{E}_{A}[\|A_{:,i}\|_2^2]\mathbb{E}_{B}[\|B_{i,:}\|_2^2]}-\frac{1}{n}\sum_{j=1}^n\sqrt{\mathbb{E}_{A}[\|A_{:,j}\|_2^2]\mathbb{E}_{B}[\|B_{j,:}\|_2^2]}\Big)^2.$$
\end{theorem}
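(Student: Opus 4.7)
The plan is to mirror the argument used for Theorem \ref{thm:eqvar}, lifted from scalar inner products to the Frobenius norm for matrix products. The starting point is the exact per-realisation error identity underlying Lemma 4 of \cite{Drineas06} and Theorem \ref{thm:minvarhigh}: for any probability $\xi$ on $[n]$ and \emph{fixed} matrices $A,B$,
\begin{equation*}
\mathbb{E}_{\xi}\bigl[\|\widehat{AB} - AB\|_F^2\bigr] \;=\; \frac{1}{M^l}\sum_{j=1}^n \frac{\|A_{:,j}\|_2^2\,\|B_{j,:}\|_2^2}{\xi_j} \;-\; \frac{1}{M^l}\|AB\|_F^2.
\end{equation*}
This is the natural matricial replacement for the scalar identity used implicitly in the inner-product case: each $\mathbf{a}_j^2\mathbf{b}_j^2$ is promoted to $\|A_{:,j}\|_2^2\|B_{j,:}\|_2^2$.

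Next I would take the outer expectation $\mathbb{E}_{A,B}$ of both sides. Because the $S_l$-randomness is independent of $(A,B)$, Fubini permits swapping the two expectations, and because the elements of $A$ and $B$ are independent by the hypothesis inherited from Theorem \ref{thm:minvarhigh}, one has $\mathbb{E}_{A,B}[\|A_{:,j}\|_2^2\|B_{j,:}\|_2^2] = \mathbb{E}_{A}[\|A_{:,j}\|_2^2]\,\mathbb{E}_{B}[\|B_{j,:}\|_2^2]$. Abbreviating $x_j := \mathbb{E}_{A}[\|A_{:,j}\|_2^2]\,\mathbb{E}_{B}[\|B_{j,:}\|_2^2]$ and substituting $\xi^u_j=1/n$ yields
\begin{equation*}
\mathbb{E}_{A,B}\bigl[\mathbb{E}_{\xi^u}[\|\widehat{AB} - AB\|_F^2]\bigr] \;=\; \frac{1}{M^l}\Bigl(n\sum_{j=1}^n x_j \;-\; \mathbb{E}_{A,B}[\|AB\|_F^2]\Bigr).
\end{equation*}
Theorem \ref{thm:minvarhigh} already gives the $\xi^{*}$ counterpart as $\bar\mu/M^l = \frac{1}{M^l}\bigl((\sum_j\sqrt{x_j})^2 - \mathbb{E}_{A,B}[\|AB\|_F^2]\bigr)$, so subtracting the two expressions collapses the claim to the purely algebraic identity $n\sum_j x_j - \bigl(\sum_j\sqrt{x_j}\bigr)^2 = n\bar\nu$.

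That identity is immediate from the definition of $\bar\nu$: expanding the square in $\bar\nu = \sum_i\bigl(\sqrt{x_i} - \tfrac{1}{n}\sum_j\sqrt{x_j}\bigr)^2$ gives $\bar\nu = \sum_i x_i - \tfrac{1}{n}\bigl(\sum_j\sqrt{x_j}\bigr)^2$, and multiplying by $n$ is exactly what is required. I do not anticipate any serious obstacle in executing this plan; the only point deserving explicit mention is the factorisation step, which relies on the element-wise independence of $A$ and $B$ carried over from Theorem \ref{thm:minvarhigh}, together with the Fubini interchange between the sketching randomness and the $(A,B)$-randomness. Once those are recorded, the result falls out as a direct matrix analogue of Theorem \ref{thm:eqvar}.
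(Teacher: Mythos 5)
Your proposal is correct and follows essentially the route the paper intends: the paper omits this proof, deferring to the inner-product analogue (Theorem \ref{thm:eqvar}, itself resting on Lemma 4 of \cite{Drineas06} and the argument in \cite{Eriksson11}), and your argument is exactly that route lifted to the Frobenius setting — the per-realisation error identity, Fubini plus element-wise independence to factor $\mathbb{E}_{A,B}[\|A_{:,j}\|_2^2\|B_{j,:}\|_2^2]$, and the algebraic identity $n\sum_j x_j-\bigl(\sum_j\sqrt{x_j}\bigr)^2=n\bar\nu$. No gaps.
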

The proof is omitted here as it is very similar to that of Theorem 2.3.

In this context, a quantity of interest $P$ can be approximated with standard MC coupled to a random sampling method for matrix multiplication via either uniform probability \eqref{eqn:unique} or the optimal probability \eqref{eqn:optproM} (if tractable)
\begin{align}\label{eqn:Phigh}
\mathbb{E}_{A,B}[P]:=\mathbb{E}_{A,B}[f^{\odot}(AB)]\approx\frac{1}{N}\sum_{j=1}^N f(A S_L^{(j)} (S^{(j)}_L)^TB){\color{black}=\frac{1}{N}\sum_{j=1}^N f\big(Z^{(j)}_L(\xi)\big)}:=\hat P,
\end{align}
where $N$ is the number of realisations for $M^L$ many index samples. To consider the MSE for the estimate $\hat P$, we apply a matrix \emph{vectorization}: for instance, if $A\in \mathbb{R}^{m\times n}$,
\begin{align}\label{eqn:vec}
\text{vec}(A)=\text{vec}([A_{:,1}\quad \cdots\quad A_{:,n}])=\begin{bmatrix} A_{:,1}\\
\vdots\\
A_{:,n}
\end{bmatrix}\in \mathbb{R}^{m n},
\end{align}
is the column concatenation of $A$ into a vector. Then the MSE would be
\begin{align}\label{eqn:msehigh}
\begin{split}
\mathbb{E}\big[\|\text{vec}(\hat P-\mathbb{E}[P])\|^2_2\big] & =\mathbb{E}\big[\|\text{vec}(\mathbb{E}[\hat P]-\mathbb{E}[P])\|_2^2\big] +\mathbb{E}\big[\|\text{vec}(\hat P-\mathbb{E}[\hat P])\|^2_2\big]\\
&=\|\text{vec}\big(\mathbb{E}[A(I-S_LS_L^T)B]\big)\|^2_2 +\mathbb{E}\big[\|\text{vec}\big(\hat P-\mathbb{E}[\hat P]\big)\|^2_2\big]\\
&=\|\text{vec}\big(\mathbb{E}[A(I-S_LS_L^T)B]\big)\|^2_2 +\mathbb{V}_{\Vert}\big[\text{vec}(\hat P)\big],
\end{split}
\end{align}
where $\mathbb{E}$ is short for $\mathbb{E}_{A,B,\xi}$ and $\mathbb{V}_{\Vert}\big[\text{vec}(X)\big]:=\mathbb{E}\big[\|\text{vec}\big(X-\mathbb{E}[X]\big)\|^2_2\big]$ for any random matrix $X$. Besides, it is easy to verify that
\begin{align}\label{eqn:vectorization_variance}
 \mathbb{V}_{\Vert}[X+Y]^{\frac{1}{2}}\leq \mathbb{V}_{\Vert}[X]^{\frac{1}{2}} +\mathbb{V}_{\Vert}[Y]^{\frac{1}{2}},
\end{align}
for any random vectors $X$ and $Y$. Note that the variance of a vectorized random matrix is indeed the variance of the random matrix in Frobenius norm. For example,
\begin{align*}
  \mathbb{V}_{\Vert}\big[\text{vec}(\hat P)\big] & =\mathbb{E}\big[\|\text{vec}\big(\hat P-\mathbb{E}[\hat P]\big)\|_2^2\big]=\mathbb{E}\big[\sum_{h=1}^{md}\text{vec}\big(\hat P-\mathbb{E}[\hat P]\big)_h^2\big]\\
  &=\mathbb{E}\big[\sum_{i=1}^{m}\sum_{k=1}^d\big(\hat P-\mathbb{E}[\hat P]\big)_{ik}^2\big]=\mathbb{E}[\big\|\hat P-\mathbb{E}[\hat P]\big\|_F^2].
\end{align*}

With these preliminaries let us now extend the approach of Section \ref{sec:numericalinner} to matrix multiplication. For $l\in [L]\bigcup \{0\}$, define $\hat P_{l}$ as the approximation to $f^{\odot}(AB)$ with $M^l$ many index samples. Recall that $\hat Y_l$ is an estimator of $\mathbb{E}[\hat P_l-\hat P_{l-1}]$ using $N_l$ realizations with $l>0$ and $\hat Y_0$ the respective estimator of $\mathbb{E}[\hat P_0]$ using $N_0$ samples, as defined in \eqref{eqn:hatYl}. Eqn. \eqref{eqn:hatY} remains unchanged, from where we have that $\mathbb{E}[\hat P_L]=\mathbb{E}[\hat Y]$. 

\begin{theorem}\label{thm:comphigh} Let $A$ and $B$ be two random matrices with sizes $m\times n$ and $n\times d$ respectively, drawn from different distributions, namely $A\sim \mathcal{L}_{A}$ and $B \sim \mathcal{L}_{B}$. Let $f:\mathbb{R}\to \mathbb{R}$ be a Lipchitz function with Lipchitz number $C_f$. Denote by $P$ the term of interest as in \eqref{eqn:Phigh}. Define $\hat P_\ell$ the corresponding approximation to $f^{\odot}(AB)$ based on the sketched version of matrix multiplication via $M^\ell$ many index samples like in \eqref{eqn:matrixsketch}.
\begin{enumerate}
\item If there exist independent estimators $\hat Y_l$ as in \eqref{eqn:hatYl} based on $N_l$ Monte Carlo samples, and positive constants $c_1$, $c_2$, $c_3$ such that
\begin{enumerate}
    \item $\big\|\mathrm{vec}(\mathbb{E}[\hat P_l -P])\big\|_2^2\leq c_1^2M^{-l}$,
    \item $\mathbb{V}_{\Vert}[\mathrm{vec}(\hat Y_l)]\leq c_2N_l^{-1}M^{-l}$,
    \item the complexity of $\hat Y_l$, denoted by $C_l$, is bounded by $C_l\leq c_3 N_l M^l$,
\end{enumerate}
then there exists a positive constant $c_4$ such that for $\epsilon< e^{-1}$, there are values $L$ and $N_l$ for which the multilevel estimator $\hat Y=\sum_{l=0}^L\hat Y_l$
has an MSE $\mathbb{E}[\|\mathrm{vec}(\hat Y-\mathbb{E}[P])\|_2^2]$ with bound $\epsilon^2$, with computational complexity $$C(\hat Y):=\sum_{l=0}^L C_l\leq c_4\epsilon^{-2}(\log \epsilon)^2.$$

\item Furthermore, define the estimator based on the finest level $L$ and $N$ realizations as in \eqref{eqn:P} with either the uniform probability \eqref{eqn:unique} or the optimal probability \eqref{eqn:optproM} (if approachable). Suppose 
\begin{enumerate}
    \item the variance for $\hat P$ is bounded by the same constant $c_2$, i.e., $\mathbb{V}_{\Vert}[\hat P]\leq c_2N^{-1}$,
    \item the complexity for $\hat P$ is bounded by the same constant $c_3$, i.e., $C(\hat P)\leq c_3NM^L$,
\end{enumerate}
then with the same accuracy $\epsilon^2$, $C(\hat P)\leq c_6 \epsilon^{-4}$ which is much larger than the bound of $C(\hat Y)$.
\end{enumerate}
\end{theorem}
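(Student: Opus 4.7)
The plan is to mirror the scalar proof of Theorem \ref{thm:comp} line-by-line, with the scalar variance $\mathbb{V}$ replaced throughout by the Frobenius/vectorised variance $\mathbb{V}_{\Vert}$. Starting from the decomposition \eqref{eqn:msehigh} applied to $\hat Y$ in place of $\hat P$ (and using $\mathbb{E}[\hat Y]=\mathbb{E}[\hat P_L]$ as noted just before the statement), the MSE splits as
\begin{equation*}
\mathbb{E}\bigl[\|\mathrm{vec}(\hat Y-\mathbb{E}[P])\|_2^2\bigr] = \bigl\|\mathrm{vec}(\mathbb{E}[P]-\mathbb{E}[\hat P_L])\bigr\|_2^2 + \mathbb{V}_{\Vert}[\mathrm{vec}(\hat Y)].
\end{equation*}

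For the bias term I would choose $L=\lceil \log(2c_1^2\epsilon^{-2})/\log M\rceil$ exactly as in \eqref{eqn:L}, so that condition 1 yields squared bias at most $c_1^2 M^{-L}\le \epsilon^2/2$. For the variance term I would exploit the independence of the level estimators $\hat Y_l$ to obtain entry-wise additivity of the componentwise variances, hence $\mathbb{V}_{\Vert}[\mathrm{vec}(\hat Y)]=\sum_{l=0}^L \mathbb{V}_{\Vert}[\mathrm{vec}(\hat Y_l)]$. Choosing $N_l=\lceil 2(L+1)c_2\epsilon^{-2}M^{-l}\rceil$ as in \eqref{eqn:Nl} and invoking condition 2 then gives $\mathbb{V}_{\Vert}[\mathrm{vec}(\hat Y)]\le \epsilon^2/2$, so that the total MSE is at most $\epsilon^2$.

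For the complexity bound I would reuse the two elementary estimates already established in the scalar proof: $L+1\le c_5\log \epsilon^{-1}$ from \eqref{eqn:L+1} and $M^{L-1}\le 2c_1^2\epsilon^{-2}$ from \eqref{eqn:ML-1}. Summing condition 3 against the choice of $N_l$ and collapsing the geometric series $\sum_{l=0}^L M^l$ yields
\begin{equation*}
C(\hat Y)\le c_3\sum_{l=0}^L\bigl(2(L+1)c_2\epsilon^{-2}M^{-l}+1\bigr)M^l\le c_4\epsilon^{-2}(\log\epsilon)^2,
\end{equation*}
with the same constant $c_4=2c_2c_3c_5^2+2c_3c_1^2M^2/(M-1)$ as in the scalar case. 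For the single-level comparison, the estimator $\hat P$ inherits the same $L$ for the bias contribution while an integer $N=\lceil 2c_2\epsilon^{-2}\rceil$ suffices for the variance side via condition 1$'$; condition 2$'$ combined with \eqref{eqn:ML-1} then delivers $C(\hat P)\le c_6\epsilon^{-4}$ with $c_6=2c_1^2 c_3 M^2(2c_2+e^{-2})$, precisely as before.

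The only substantive step beyond this cosmetic translation is the additivity of $\mathbb{V}_{\Vert}$ across levels, which I expect to be the main obstacle worth spelling out. It follows because $\|\mathrm{vec}(\cdot)\|_2^2=\|\cdot\|_F^2$ is a sum over matrix entries and the independence of the $\hat Y_l$ propagates entry-wise, so that the per-entry variances add and summing over entries yields the claimed identity. Once this observation is in place, every bound from the proof of Theorem \ref{thm:comp} carries over verbatim with $|\cdot|^2$ replaced by $\|\mathrm{vec}(\cdot)\|_2^2$, and no further difficulty arises.
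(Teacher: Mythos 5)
Your proposal is correct and follows essentially the same route as the paper, which omits the proof entirely and simply refers to the argument of Theorem \ref{thm:comp} together with the vectorised MSE decomposition \eqref{eqn:mseY}; your filled-in details (the choices \eqref{eqn:L} and \eqref{eqn:Nl}, the bounds \eqref{eqn:L+1} and \eqref{eqn:ML-1}, and the level-wise additivity of $\mathbb{V}_{\Vert}$ under independence) are exactly what that reduction requires. The only point worth noting is that you implicitly read condition 1 as $c_1^2 M^{-l}$ rather than the stated $c_1^2 M^{-1}$, which is evidently the intended reading (a typo in the statement, consistent with Corollary \ref{cor:comphigh} and the scalar case).
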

The proof is similar to that of Theorem \ref{thm:comp}, expect from the decomposition of MSE,
\begin{align}\label{eqn:mseY}
\begin{split}
&\mathbb{E}\big[\|\mathrm{vec}(\hat Y-\mathbb{E}[P])\|^2_2\big]=\mathbb{E}\big[\|\mathrm{vec}(\mathbb{E}[\hat Y-P])\|_2^2\big] +\mathbb{V}_{\Vert}\big[\mathrm{vec}(\hat Y)\big],
\end{split}
\end{align}
so we omit the proof. A more important issue is to verify our proposed MLMC estimator satisfies the conditions of Theorem \ref{thm:comphigh}.
\begin{corollary}\label{cor:comphigh}
Assume the same setting in Theorem \ref{thm:comphigh}  {\color{black} via the sampling distribution $\xi^u$ in \eqref{eqn:unique}}. Then we have
\begin{enumerate}
    \item $c_1=C_f^2(n\bar \nu+\mu)$,
    \item $c_2=2C_f^2(M+1)(n\bar \nu+\mu)+2\mathbb{V}_{\Vert}[f^{\odot}(AB)]$,
    \item $c_3=md(1+M^{-1})$.
\end{enumerate}
\end{corollary}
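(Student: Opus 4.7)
The plan is to mirror the argument of Corollary \ref{cor:comp} verbatim, with three substitutions: the scalar absolute value becomes the Frobenius norm, the expected squared inner-product error $M^{-l}(n\nu+\mu)$ is replaced by the expected squared Frobenius error $M^{-l}(n\bar\nu+\bar\mu)$ from Theorem \ref{thm:eqvarhigh}, and the complexity of a single realisation picks up a factor of $md$ because each outer product $A_{:,r_j}B_{r_j,:}$ is an $m\times d$ matrix rather than a scalar. The crucial elementary fact that makes this transfer work is that $f^{\odot}$ is Lipschitz in Frobenius norm: for matrices $X,Y$ of the same size,
$$\|f^{\odot}(X)-f^{\odot}(Y)\|_F^2=\sum_{i,k}(f(X_{ik})-f(Y_{ik}))^2\leq C_f^2\sum_{i,k}(X_{ik}-Y_{ik})^2=C_f^2\|X-Y\|_F^2.$$
Combined with the identity $\mathbb{V}_{\Vert}[X]=\mathbb{E}[\|X-\mathbb{E}X\|_F^2]$ noted after \eqref{eqn:msehigh} and the triangle inequality $\mathbb{V}_{\Vert}[X+Y]^{1/2}\le\mathbb{V}_{\Vert}[X]^{1/2}+\mathbb{V}_{\Vert}[Y]^{1/2}$ already recorded in the paper, this lets every step from the inner-product case carry over.

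For condition 1, I would first push expectation inside the norm via a Jensen-type bound $\|\mathrm{vec}(\mathbb{E} Z)\|_2^2=\|\mathbb{E} Z\|_F^2\le \mathbb{E}[\|Z\|_F^2]$ with $Z=\hat P_l-P$, then apply the elementwise Lipschitz estimate above to replace $\hat P_l-P=f^{\odot}(AS_lS_l^TB)-f^{\odot}(AB)$ by $C_f^2\|A(I-S_lS_l^T)B\|_F^2$, and close with \eqref{eqn:eqvarhigh} to obtain the bound $C_f^2M^{-l}(n\bar\nu+\bar\mu)$, which identifies the stated $c_1$.

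For condition 2 at a level $l>0$, I would split using the $\mathbb{V}_{\Vert}^{1/2}$ triangle inequality
$$\mathbb{V}_{\Vert}[\hat P_l-\hat P_{l-1}]^{1/2}\le\mathbb{V}_{\Vert}[\hat P_l-P]^{1/2}+\mathbb{V}_{\Vert}[P-\hat P_{l-1}]^{1/2},$$
dominate each summand by its second moment (since variance is bounded by second moment when the subtracted object is the target), apply the same Lipschitz-plus-Theorem \ref{thm:eqvarhigh} chain to each, and collect $M^{-l}+M^{-l+1}=(1+M)M^{-l}$. For $l=0$ I would split $\hat P_0=(\hat P_0-P)+P$, handle the first summand in the same way, and leave $\mathbb{V}_{\Vert}[P]=\mathbb{V}_{\Vert}[f^{\odot}(AB)]$ as the second summand, whose square-root triangle combination then squares to $2C_f^2(n\bar\nu+\bar\mu)+2\mathbb{V}_{\Vert}[f^{\odot}(AB)]$. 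Dividing by $N_l$ when moving from $\hat P_l-\hat P_{l-1}$ to the empirical average $\hat Y_l$ gives precisely $c_2 N_l^{-1}M^{-l}$.

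For condition 3, evaluating $\hat P_l^{(k)}$ reduces to forming and averaging $M^l$ rescaled rank-one $m\times d$ matrices $\frac{1}{\xi^u_{r_j}}A_{:,r_j}B_{r_j,:}$ followed by an elementwise application of $f$, at cost $O(md\cdot M^l)$; the coupled coarse evaluation $\hat P_{l-1}^{(k)}$ reuses the first $M^{l-1}$ of those indices at cost $O(md\cdot M^{l-1})$. Summing over the $N_l$ realisations at level $l$ yields $C_l\le md(M^l+M^{l-1})N_l=md(1+M^{-1})N_lM^l$, which identifies $c_3=md(1+M^{-1})$, and the same per-realisation count bounds $C(\hat P)$ by $md(1+M^{-1})NM^L$ as required. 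The only real obstacle here is careful bookkeeping; no new analytic ingredient beyond Jensen, the $\mathbb{V}_{\Vert}^{1/2}$ triangle inequality, and Theorem \ref{thm:eqvarhigh} is needed, and the matrix factor $md$ in $c_3$ is the sole structural difference from the inner-product corollary.
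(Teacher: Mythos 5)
Your proposal is correct and follows essentially the same route as the paper's own proof: Jensen plus the elementwise Lipschitz bound and Theorem \ref{thm:eqvarhigh} for condition 1, the $\mathbb{V}_{\Vert}^{1/2}$ triangle inequality with the splits $\hat P_l-P$, $P-\hat P_{l-1}$ (and $\hat P_0=(\hat P_0-P)+P$) for condition 2, and the $md$-per-outer-product operation count for condition 3. The only difference is cosmetic: you write the constant as $n\bar\nu+\bar\mu$, which is what Theorem \ref{thm:eqvarhigh} actually yields, whereas the statement and the paper's proof write $n\bar\nu+\mu$ --- an apparent typo in the paper rather than a gap in your argument.
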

\begin{proof}

\begin{enumerate}
    \item For any $l\in \mathbb{N}$ we have that
    \begin{align*}
    \big\|\text{vec}\big(\mathbb{E}[f^{\odot}(AB)-f^{\odot}(Z_l(\xi^u))]\big)\big\|_2^2  &\leq \mathbb{E}\big[\big\|\text{vec}\big(f^{\odot}(AB)-f^{\odot}({\color{black}Z_l(\xi^u)})\big)\big\|_2^2\big]\\
&= \mathbb{E}\big[\big\|f^{\odot}(AB)-f^{\odot}({\color{black}Z_l(\xi^u)})\big\|_F^2\big]\\
&\leq C_f^2\mathbb{E}[\|AB-{\color{black}Z_l(\xi^u})\|_F^2] \\
&\leq C_f^2 M^{-l}(n\bar \nu+\mu),
\end{align*}
where the last inequality comes from Theorem \ref{thm:eqvarhigh}.
    \item For any $l>0$ we have that
    \begin{align*}
&\mathbb{V}_{\Vert}[\text{vec}(\hat P_l-\hat P_{l-1})] \\
&\leq \Big(\mathbb{V}_{\Vert}\big[\text{vec}\big(\hat P_l-f^{\odot}(AB)\big)\big]^{\frac{1}{2}}+\mathbb{V}_{\Vert}\big[\text{vec}\big(\hat P_{l-1}-f^{\odot}(AB)\big)\big]^{\frac{1}{2}}\Big)^2\\
&\leq \Big(\mathbb{E}\big[\big\|\text{vec}\big(f^{\odot}(AB)-f^{\odot}({\color{black}Z_l(\xi^u)})\big)\big\|_2^2\big]^{\frac{1}{2}}\\
&\quad +\mathbb{E}\big[\big\|\text{vec}\big(f^{\odot}(AB)-f^{\odot}({\color{black}Z_{l-1}(\xi^u)})\big)\big\|_2^2\big]^{\frac{1}{2}}\Big)^2\\
&\leq 2C_f^2\big(\mathbb{E}[\|AB-{\color{black}Z_{l}(\xi^u)}\|_F^2]^{\frac{1}{2}}+\mathbb{E}[\|AB-{\color{black}Z_{l-1}(\xi^u)}\|_F^2]^{\frac{1}{2}}\big)^2\\
&\leq  2C_f^2(M^{-l}+M^{-l+1})(n\bar \nu+\mu)\\
&\leq 2C_f^2(M+1) (n\bar \nu+\mu)M^{-l}. 
\end{align*}
For $l=0$ we have that
\begin{align*}
&\mathbb{V}_{\Vert}[\text{vec}(\hat P_0)] =\mathbb{V}_{\Vert}\big[\text{vec}\big(f^{\odot}({\color{black}Z_0(\xi^u)})\big)\big]\\
&\leq \Big(\mathbb{V}_{\Vert}\big[\text{vec}\big(f^{\odot}({\color{black}Z_0(\xi^u)})-f^{\odot}(AB)\big)\big]^{\frac{1}{2}}+\mathbb{V}_{\Vert}\big[\text{vec}\big(f^{\odot}(AB)\big)\big]^{\frac{1}{2}}\Big)^2\\
&\leq 2C_f^2\mathbb{E}[\|A(I-S_0S^T_0 )B\|_F^2]+2\mathbb{V}_{\Vert}\big[\text{vec}\big(f^{\odot}(AB)\big)\big]\\
&\leq 2C_f^2(n\bar \nu+\mu)+2\mathbb{V}_{\Vert}\big[\text{vec}\big(f^{\odot}(AB)\big)\big].
\end{align*}
Besides, it is easy to see that $\mathbb{V}_{\Vert}[\text{vec}(\hat P)]$
can be bounded by the same $c_2$ together with $N^{-1}$.
    \item For any $l>0$ we can see easily the complexity is roughly
    \begin{align*}
        C_l\leq md N^{l}(M^l+M^{l-1})=md(1+M^{-1})N^{1}M^{l},
    \end{align*}
    while 
           \begin{align*}
        C_0\leq md N^{0}M^0\leq md(1+M^{-1})N^{0}M^{0}.
    \end{align*}
    Besides, we have for the complexity of $\hat P$ that
           \begin{align*}
        C(\hat P)\leq mdNM^L\leq md(1+M^{-1})NM^{L}.
    \end{align*}
Thus $c_3$ can be set as $md(1+M^{-1})$.
\end{enumerate}
\end{proof}

\begin{remark}\label{rmk:LdetermineMatrix}
Asymptotically as $l\to \infty$, we have that $\|\text{vec}(\mathbb{E}[P-\hat P_l])\|_2\approx c_1 M^{-\frac{l}{2}}$, and hence
$$\|\text{vec}(\mathbb{E}[\hat P_l-\hat P_{l-1}])\|_2\approx (\sqrt{M}+1)c_1 M^{-\frac{l}{2}}\approx (\sqrt{M}+1)\|\text{vec}(\mathbb{E}[P-\hat P_{l}])\|_2.$$
Similarly as in Section 4.2 of \cite{Giles08}, this information can be used as an approximate bound: $L$ can be set as the smallest integer such that
\begin{equation}\label{eqn:L_num_bound_matrix}
\|\text{vec}(\hat Y_L)\|_2<\frac{1}{\sqrt{2}}(\sqrt{M}+1)\epsilon.
\end{equation} 
By doing this, we might achieve a bias bounded by $\frac{\epsilon^2}{2}$ without evaluating $c_1$.
\end{remark}
\begin{remark}[Optimal $N_l$]\label{rmk:NlMatrix}
To achieve a fixed variance, i.e., $\mathbb{V}_{\Vert}[\text{vec}(\hat Y)]<\frac{1}{2}\epsilon$, the optimal $N_l$ can be chosen as
\begin{equation}\label{eqn:optimalNl_matrix}
N_l\approx\Big \lceil 2\epsilon^{-2}\sqrt{V_lM^{-l}}\big(\sum_{j=0}^L\sqrt{V_l M^{l} }\big)\Big \rceil,
\end{equation}
where $V_l$ is the variance of the vectorized form of a single sample $\hat P_l-\hat P_{l-1}$ (recall the definition of the vectorized matrix variance right before \eqref{eqn:vectorization_variance}). 
This result is simply an application of Section 1.3 of \cite{Giles15} or Eqn. (12) in \cite{Giles08} to the `stepsize' $M^{-l}$. 
\end{remark}

To choose the optimal value of $M$ we argue as in Section \ref{sec:optimalM}, that is, $M=11$ leads to the least computational complexity among between all choices of $M$. In the numerical experiments of Section \ref{sec:eg2}, it turns out that $M=10$ and $M=6$ both yield acceptable approximations.


\subsection{MLMC sketching algorithm}\label{sec:numericalinner} 
Based on the general discussion in the beginning of Section 2, Remark \ref{rmk:LdetermineMatrix} and Remark \ref{rmk:NlMatrix}, we propose an algorithm for estimating the matrix product based on MLMC method in Algorithm \ref{alg:MLMCmatrix}. The inner product case discussed in Section 2 can be treated as a special case. Algorithm \ref{alg:MLMCmatrix} approximates $\mathbb{E}[f^{\odot}(AB)]$ through \eqref{eqn:hatY} under uniform probability \eqref{eqn:unique}, where the evaluation for each $\hat Y_l$ in \eqref{eqn:hatY} is performed through function \emph{level\_estimation} described in Algorithm \ref{alg:Yl}. To ensure the convergence, the choices of $L$ and $N_l$ with $l\in [L]\bigcup\{0\}$ are determined within Algorithm \ref{alg:MLMCmatrix} using a \emph{while} loop with one of the conditions given by Eqn. \eqref{eqn:L_num_bound_matrix} in Remark \ref{rmk:LdetermineMatrix}\footnote{The condition for inner product is slightly different, see  Eqn. \eqref{eqn:L_num_bound} in Remark \ref{rmk:Lbound}.}. It is worth noticing that, while the value $L$ and therefore $N_l$ for $l\in [L]\bigcup\{0\}$ are updated in the while loop (see Line 16 and Line 9), previous evaluations for $\hat Y_l$ are reused in Line 13 for efficiency.

Although the outline in Algorithm \ref{alg:Yl} is simple to follow we draw the reader's attention to Line 17 describing how $\hat P_l^{(k)}$ and $\hat P_{l-1}^{(k)}$ are computed through the common realization of $M^l$ indices. Indeed, the procedure for getting $\hat P_{l}^{(k)}$ is by random sampling as in \eqref{eqn:matrixsketch} via the indices of a sample realization of size $M^l$ under uniform probability, and likewise $\hat P_{l-1}^{(k)}$ via $M^{l-1}$ of those $M^l$ indices. That is, taking {\color{black}$(r_1,\ldots,r_{M^l})$} as a realization, then
\begin{align*}
    \hat P_{l}^{(k)}=f^{\odot}\Big(\frac{n}{M^l}\sum_{j=1}^{M^l}A^{(k)}_{:,r_j}B^{(k)}_{r_j,:}\Big),\ \ \text{and}\ \ \  \hat P_{l-1}^{(k)}=f^{\odot}\Big(\frac{n}{ M^{l-1}}\sum_{j=1}^{M^{l-1}}A^{(k)}_{:,r_{jM}}B^{(k)}_{r_{jM},:}\Big).
\end{align*}
{\color{black}Note that in practice the above computation can be further simplified by mapping $(r_1,\ldots,r_{M^l})$ into a set with non-repeated elements.}

\begin{algorithm}
\caption{function \emph{level\_estimation}.}
\label{alg:Yl}
\begin{algorithmic}[1]
\State \textbf{Pre-defined:} $\mathbf{\mathcal{L}_A}$ and $\mathbf{\mathcal{L}_B}$, the distributions of the targeted random matrices.
\State \hspace{1.1cm} $f$, the targeted function; {\color{black} $\xi^u$, the uniform sampling distribution defined in \eqref{eqn:unique}}.
\State \textbf{input:} $l$, the level size;
\State \hspace{1.1cm} $M$, the base number;
\State \hspace{1.1cm} $N_l$, the number of iterations.
\State \textbf{output:} $\hat Y_l$, the approximated version of $\mathbb{E}[\hat P_l-\hat P_{l-1}]$ for $l\neq 0$ or $\mathbb{E}[\hat P_l]$ for $l=0$.
\State \textbf{initialization}: $\hat Y_l=0$.
\If {$l=0$}
 \For {$\ell=1\cdots N_l$}
\State get a pair of samples $A^{(\ell)}$ and $B^{(\ell)}$ from $\mathbf{\mathcal{L}_A}$ and $\mathbf{\mathcal{L}_B}$;
\State sample one index $r$ from $1$ to $n$ {\color{black} according to $\xi^u$};
\State set $\hat Y_l=\hat Y_l+ \frac{1}{N_0}f^{\odot}\big(nA^{(\ell)}_{:,r}B^{(\ell)}_{r,:}\big)$;
\EndFor 
\Else 
\For {$k=1\cdots N_l$}
\State get a pair of samples $A^{(k)}$ and $B^{(k)}$ from $\mathbf{\mathcal{L}_A}$ and $\mathbf{\mathcal{L}_B}$;
\State sample $M^l$ many indices {\color{black} $(r_j)_{j=1}^{M^l}$ from $1$ to $n$ according to $\xi^u$};
\State set $$\hat Y_l=\hat Y_l+ \frac{1}{N_l}\Big(f^{\odot}\Big(\frac{n}{ M^l}\sum_{j=1}^{M^l}A^{(k)}_{:,r_j}B^{(k)}_{r_j,:}\Big)-f^{\odot}\Big(\frac{n}{ M^{l-1}}\sum_{j=1}^{M^{l-1}}A^{(k)}_{:,r_{jM}}B^{(k)}_{r_{jM},:}\Big)\Big);$$
\EndFor 
\EndIf
\State \textbf{return}: $\hat Y_l$.

\end{algorithmic}
\end{algorithm}

\begin{algorithm}
\caption{The MLMC estimator for $\mathbb{E}[f^{\odot}(AB)]$.}
\label{alg:MLMCmatrix}
\begin{algorithmic}[1]
\State \textbf{Pre-defined:} $\mathbf{\mathcal{L}_A}$ and $\mathbf{\mathcal{L}_B}$, the distributions of the targeted random vectors.
\State \hspace{1.1cm} {\color{black}$f$: the targeted function; $\xi^u$: the uniform distribution in \eqref{eqn:unique}}.
\State \textbf{input:} $M$, the base number;
\State \hspace{1.1cm} $\epsilon$, the error tolerance.
\State \textbf{output:} $\hat Y$, the approximated version of $\mathbb{E}[f^{\odot}(AB)]$.
\State \textbf{initialization}: set $L=0$, $t=0$.
\While {$L<3$ or $\frac{\|\hat Y_{L-1}\|_F}{N_{L-1}^{(t-1)}}\geq\frac{1}{\sqrt{2}}(\sqrt{M}+1)\epsilon$}
\State initialize $\hat Y_L=0$;
\State update $V_l$ (defined in Remark \ref{rmk:NlMatrix}) for all $l\in [L]\bigcup \{0\}$; 
\State calculate the optimal $N_l^{(t)}$ for all $l\in [L]\bigcup \{0\}$ through Eqn.\eqref{eqn:optimalNl_matrix};
\State update $\hat Y_L=\hat Y_L+N_L^{(t)}\text{\emph{level\_estimation}}(L,M,N_L^{(t)})$;
\If {$L>0$}
\For {$l=0\cdots L-1$}
\State update $\hat Y_l=\hat Y_l+(N_L^{(t)}-N_L^{(t-1)})\text{\emph{level\_estimation}}(L,M,N_L^{(t)}-N_L^{(t-1)})$;
\EndFor
\EndIf
\State set $L=L+1$ and $t=t+1$;
\EndWhile
\State update $\hat Y_l=\hat Y_l/N_l^{(t-1)}$ for all $l\in [L-1]\bigcup \{0\}$;
\State \textbf{return}: $\sum_{l=0}^{L-1}\hat Y_l$.
\end{algorithmic}
\end{algorithm}

\section{Numerical experiments}\label{sec:num}
In this part, we present some numerical experiments designed to test the performance of the Algorithm \ref{alg:MLMCmatrix} in comparison with a standard MC method embedded with the optimal sampling distribution (see Theorem \ref{thm:minvar} and Theorem \ref{thm:minvarhigh}).  {\color{black}Our experiments are implemented in Python (version 3.6.9) with Numpy-based calculations being optimized under openBLAS \cite{Zhang12} and executed on a Linux cluster with two 14-core E5-2690 v4 Intel Xeon CPUs at 2.60GHz and non-uniform memory allocation.}

\subsection{Example for the inner product}\label{sec:eg1}
Set $n=10^4$ with $\mathbf{a}_j\sim \frac{j}{50}(0.4-N(0,1))$ and $\mathbf{b}_j\sim\cos\big(\text{Poi}(10)+2\text{Exp}(1)\big)\text{Bern}(0.05)$, $j\in [n]$, where $\text{Poi}(\lambda)$ is a Poisson random variable with parameter $\lambda$, $\text{Exp}(\alpha)$ is an exponential random variable with parameter $\alpha$, and $\text{Bern}(\beta)$ is a Bernoulli random variable with success rate $\beta$. As the Bernoulli random variable has low success rate we expect $\mathbf{b}$ to be a sparse vector. In this example we targeted function is set to $f(x):=\frac{1}{|x|H(x+0.4)+0.01}$, where $H(\cdot)$ is an Heaviside step function. It is easy to see in this case $f$ is highly nonlinear. 

We test the Algorithm \ref{alg:MLMCmatrix} for the inner product case with a parameter $M=10$ and error tolerance $\epsilon=0.1${\color{black}, where the reference solution is obtained through direct MC computation:}
\begin{equation}\label{eqn:reference1}
{\color{black}\mathbb{E}[\mathbf{a}^T\mathbf{b}]\approx\frac{1}{\mathcal{N}_1}\sum_{j=1}^{\mathcal{N}_1}(\mathbf{a}^{(j)})^T\mathbf{b}^{(j)},\ \text{with\ } \mathcal{N}_1=10^5.}
\end{equation} 
The value of $L$ and the number of realizations at each level $l\leq L$, i.e., $N_l$ with $l\in [L]\bigcup\{0\}$ are tuned automatically by the algorithm itself. In our case $L=3$ and $N_l$ is obtained through scaling \eqref{eqn:optimalNl} by $\frac{1}{20}$. This scaling factor $\frac{1}{20}$ is introduced to prevent oversampling. Note that the scaling factor does not affect the trend of $N_l$. Figure \ref{fig2} illustrates the trend of the variance of each single path sample $\hat P_l-\hat P_{l-1}$ together with its corresponding $N_l$. From there it is easy to see that there is a clear decay in variance with respect to $l$ from $l=1$, which results in the nearly polynomial decay in the number of realizations $N_l$.
\begin{figure}
\begin{center}
  \includegraphics[width=0.75\textwidth]{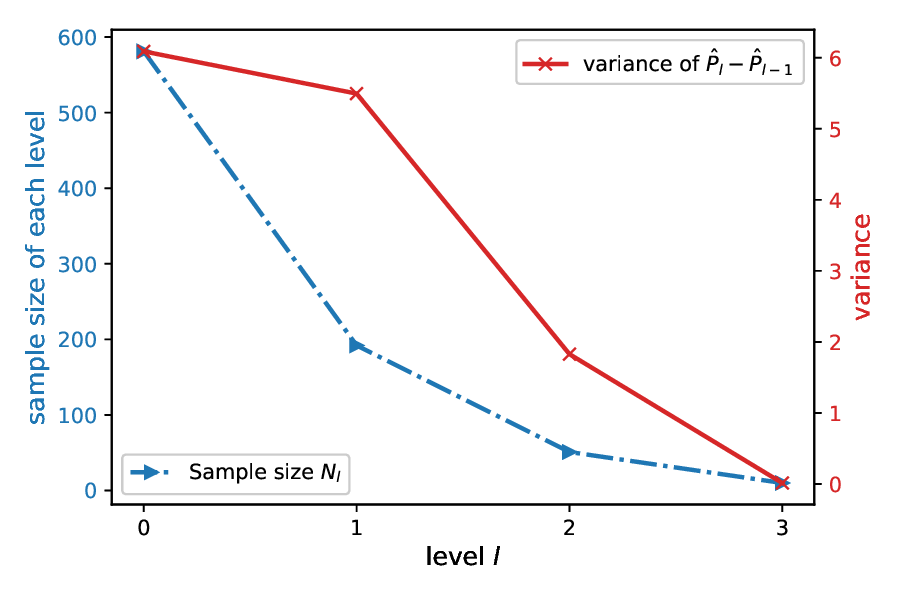} \\
 \caption{Inner product case: plot of the variance of a single realization $\hat P_l-\hat P_{l-1}$ up to $l=L$ (red solid line), and its corresponding number of realizations for each $l$ up to $l=L$ (blue dashed line): for $l=0$, the variance of a single realization $\hat P_l-\hat P_{l-1}$ is indeed the variance of a single realization $\hat P_0$.\label{fig2}}
\end{center}
 \end{figure}
For comparison we also perform a standard MC simulation of the same $M$ and $L$ under \emph{optimal sampling} (Theorem \ref{thm:minvar}) with a number of repetitions chosen to maintain roughly the same accuracy level (convergence). 
The results obtained are tabulated in Table \ref{table1}.
\begin{table}
\centering

  \begin{tabular}{c||c|c|c|c|c|c|c}
\hline
\hline
 &
\multicolumn{3}{c|}{\color{black}MLMC using $\xi^{u}$} &
\multicolumn{3}{c|}{\color{black} MC using $\xi^{*}$} &\color{black} directMC \\
\hline
($M$,$L$) & AE & RE  & time cost & AE & RE  & time cost & \color{black} time cost\\
\hline
$(10,3)$ & 0.002 & 0.079 & 0.047 s & 0.001 & 0.041 & 0.274 s &\color{black} 0.423 s\\
\hline
\hline
\end{tabular}
\caption{Numerical results from the implementation of our method on approximating the inner product. These include records of the relative errors (RE), absolute errors (AE) and computational times for Algorithm \ref{alg:MLMCmatrix} under $M=10$ and its corresponding $L$. {\color{black}For comparison we provide also the results from standard MC \eqref{eqn:P} with optimal sampling distribution $\xi^{*}$  \eqref{eqn:optpro} based on the finest level $L$, and time cost for getting reference solution through direct MC \eqref{eqn:reference1}.}}
\label{table1}
\end{table}

From Table \ref{table1}, the MLMC estimator using $\xi^{u}$ in general outperforms the MC one using $\xi^{*}$ in terms of the elapsed time. Though MC using $\xi^{u}$ provides an approximation that doubles the accuracy of MLMC, its computational time is about six times longer. {\color{black}The computation times of both estimators are less than that of the directMC for getting the reference solution, which illustrates the advantage of our proposed estimator in practice.}  

\subsection{Example for the matrix multiplication}\label{sec:eg2}
In the matrix multiplication case we consider a setup with $n=10^4$, $m=d=10^3$ using $A_{ij}\sim g_1\big(\frac{j}{10^4}(0.5-N(0,1))\big), $ where $g_1(x):=\sin(x)+N(0,1)x$, and $B_{jk}\sim g_2(\text{Poi}(2))\text{Bern}(0.2)$, where $g_2(x):=\cos(x)H(5-x)$ for $i\in[m], j\in [n]$ and $k\in[d]$. Like before, $\text{Poi}(\lambda)$ denotes a Poisson random variable with parameter $\lambda$ and $\text{Bern}(\beta)$ is a Bernoulli random variable with success rate $\beta$. The targeted function is chosen to be $f(x):=|x|H(2-x)$, where $H(\cdot)$ is an Heaviside step function.

\begin{figure}
\begin{center}
  \includegraphics[width=0.75\textwidth]{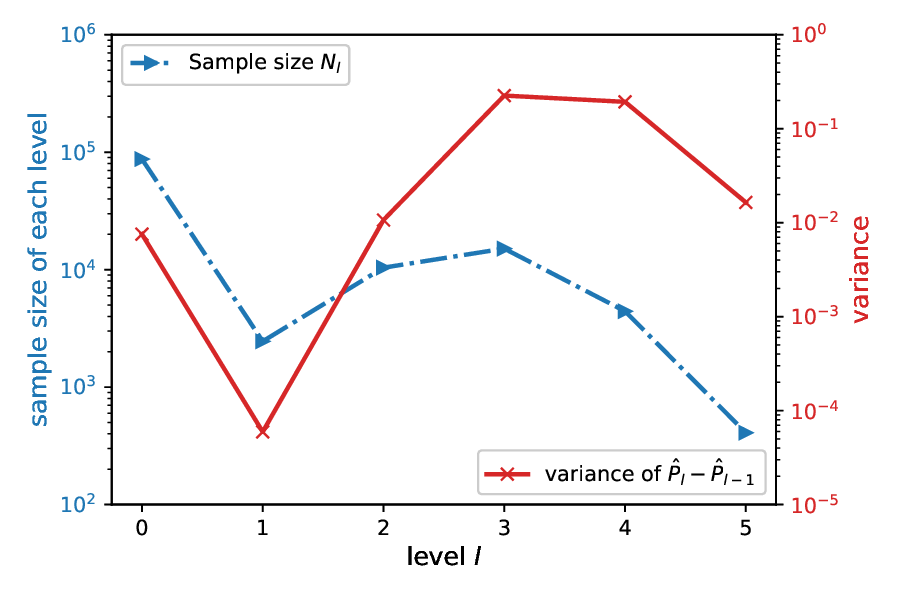} \\
 \caption{Matrix multiplication case: Plot of the variance of a single realization $\hat P_l-\hat P_{l-1}$ up to $l=L$ (red solid line), and its corresponding number of realizations for each $l$ up to $l=L$ (blue dashed line): for $l=0$, the variance of a single realization $\hat P_l-\hat P_{l-1}$ is indeed the variance of a single realization $\hat P_0$.\label{fig3}}
\end{center}
 \end{figure}

Similar to the inner product example, we run Algorithm \ref{alg:MLMCmatrix} for the matrix product with base number $M=10$ and the error tolerance $\epsilon=0.1$. The reference solution is computed as
\begin{equation}\label{eqn:reference2}
{\color{black}\mathbb{E}[A^TB]\approx\frac{1}{\mathcal{N}_2}\sum_{j=1}^{\mathcal{N}_2}A^{(j)} B^{(j)},\ \text{with\ } \mathcal{N}_2=10^5.}
\end{equation} 
Algorithm \ref{alg:MLMCmatrix} automatically chooses $L=5$. Though $M^L$ is now larger than $n$, this does not imply sampling all the columns of $A$. Besides, the number of realizations generated at the finest level $L$ is very small, which does not affect the total performance. Meanwhile the variance of each single path sample $\hat P_l-\hat P_{l-1}$ together with its corresponding $N_l$ is directly obtained through \eqref{eqn:optimalNl_matrix}. Figure \ref{fig3} illustrates the trend of the variance of $\hat P_l-\hat P_{l-1}$ and $N_l$ up to $l=L$. It can be noted from Figure \ref{fig3} that from $l=3$ the variance curve begins to decay, while the apparent low values in variance from $l=0$ to $l=2$ are due to the nature of randomised sketching of matrix multiplication. For example, to get an approximation {\color{black}$Z_l(\xi^u)$} of $AB$ through \eqref{eqn:matrixsketch} with $l=0$, only one column of A and the corresponding row of B are selected and multiplied. This is guaranteed to decrease the variance of $Z_0(\xi^u)$. We can also observe this phenomenon from the inner product case as depicted in Figure \ref{fig2}, where the variance of $l=0$ is only slightly bigger than the one of $l=1$. {\color{black} The curve in Figure \ref{fig3} also indicates that our proposed estimator will be more efficient in super-large-scale matrix application, where the variance decay speeds up for higher level $l>>5$. }

To compare performance, {\color{black}a standard MC simulation of the same $M$ and $L$, formulated in \eqref{eqn:Phigh},} is implemented with optimal sampling distribution $\xi^{**}$ (Theorem \ref{thm:minvarhigh}) with the number of repetitions chosen to maintain roughly the same accuracy level. As matrix $B$ is a very sparse matrix, optimal probability defined in Eqn. \ref{eqn:minvarhigh} might have sparse or very small entries. Therefore even $M^L$ is now larger than $n$, the probability that all the columns of $A$ are sampled to obtain an approximation is pretty small. The results obtained are recorded in Table \ref{table2}. {\color{black} The MLMC estimator using $\xi^{u}$} in general outperforms the {\color{black} MC one using $\xi^{**}$} in terms of the elapsed time. {\color{black}Meanwhile, the computational times for MC using $\xi^{**}$  are admittedly very large, taking three times longer than the directMC. This is mainly due to the choice of the high level $L=5$ compared to the matrix size. On the other hand, it is reasonable to anticipate that the MLMC method under the approximated optimal probability instead of the uniform one, would lead to a drastic improvement of the efficiency of the approximation beyond what has been demonstrated in this work.} 
\begin{table}
\centering

  \begin{tabular}{c||c|c|c|c|c|c|c}
\hline
\hline
 &
\multicolumn{3}{c|}{\color{black} MLMC using $\xi^{u}$} &
\multicolumn{3}{c|}{\color{black} MC using $\xi^{**}$} & \color{black} directMC\\
\hline
($M$,$L$) & AE & RE  & time cost & AE & RE  & time cost & \color{black}time cost \\
\hline

$(10,5)$ & 0.088 & 0.006  & \color{black}2.240s & 0.069 &  0.005 & \color{black} 75.173 s & \color{black} 25.561 s\\
\hline
\hline
\end{tabular}
\caption{Numerical results from the implementation of our method on approximating the matrix product. These include records of the absolute errors in Frobenius norm (AE), the relative errors (RE) and computational times for Algorithm \ref{alg:MLMCmatrix} under $M=10$ and its corresponding $L$. {\color{black} For the sake of comparison we provide also the results from a standard MC simulation \eqref{eqn:Phigh} based on the finest level $L$ and sampling distribution $\xi^{**}$ \eqref{eqn:optproM}, and the timecost for getting the reference solution through direct MC \eqref{eqn:reference2}}.} 
\label{table2}
\end{table}



\section{Conclusions}

We presented a new approach for computing arbitrary vector and matrix products `on-the-fly' that combines ideas from sketching in randomized linear algebra and multilevel Monte Carlo approaches for estimating high-dimensional integrals. Our approach is simple to implement and, subject to optimizing some algorithmic parameters, it outperforms the standard Monte Carlo in both in terms of the accuracy and the time required for computing the estimator.

\section*{Acknowledgements}
The authors are grateful to EPSRC for funding this work through the project EP/R041431/1, titled `Randomness: a resource for real-time analytics'; YW is also funded by The Alan Turing Institute under the EPSRC grant EP/N510129/1 and by EPSRC though the project EP/S026347/1, titled 'Unparameterised multi-modal data, high order signatures, and the mathematics of data science'.

\bibliographystyle{alpha}
\footnotesize

\end{document}